\theoremstyle{theorem}
\newtheorem{theorem}{Theorem}[section]
\newtheorem{lemma}[theorem]{Lemma}
\newtheorem{corollary}[theorem]{Corollary}
\newtheorem{proposition}[theorem]{Proposition}
\newtheorem{remark}[theorem]{Remark}
\DeclareMathOperator{\Tr}{Tr}
\newcommand{\R}{{\mathbb R}}
\newcommand{\N}{{\mathbb N}}
\newcommand{\Q}{{\mathbb Q}}
\newcommand{\Z}{{\mathbb Z}}
\newcommand{\coloneq}{\mathbin{\hbox{\raise0.08ex\hbox{\rm :}}\!\!=}}
\newcommand{\eqcolon}{\mathbin{=\!\!\hbox{\raise0.08ex\hbox{\rm :}}}}
\newcommand{\rme}{{\mathrm e}}
\newcommand{\rmi}{{\mathrm i}}
\theoremstyle{definition}
\newtheorem{example}[theorem]{Example}
\newtheorem{definition}[theorem]{Definition}
\title{Quantum Kronecker fractions}
\author{S.J. Evans }
\address{Department of Mathematical Sciences,
Loughborough University, Loughborough LE11 3TU, UK}
\email{S.J.Evans@lboro.ac.uk}
\author{A.P. Veselov}
\address{Department of Mathematical Sciences,
Loughborough University, Loughborough LE11 3TU, UK}
\email{A.P.Veselov@lboro.ac.uk}
\author{B. Winn}
\address{Department of Mathematical Sciences,
Loughborough University, Loughborough LE11 3TU, UK}
\email{B.Winn@lboro.ac.uk}
\begin{document}
\maketitle

\begin{abstract}
A few years ago Morier-Genoud and Ovsienko introduced an interesting quantization of the real numbers as certain power series in a quantization parameter $q.$
It is known now that the golden ratio has minimal radius among all these series.
We study the rational numbers having maximal radius of convergence equal to 1, which we call Kronecker fractions.
We prove that the corresponding continued fraction expansions must be palindromic and describe all Kronecker fractions with prime denominators. We found several infinite families of Kronecker fractions and all Kronecker fractions with denominator less than 5000. We also comment on the irrational case and on the relation with braids, rational knots and links.
\end{abstract}

\section{Introduction}

Recently Morier-Genoud and Ovsienko \cite{MGO1, MGO2} introduced interesting quantum versions of real numbers as formal power series in a quantization parameter $q$.
For a rational number $x=\frac{r}{s}$ these series converges to a rational function
$\left[ \frac{r}s \right]_q = \frac{\mathcal{R}(q)}{\mathcal{S}(q)},$
where $\mathcal{R}$, $\mathcal{S}$ are coprime, monic polynomials with 
non-negative integer coefficients (see the next section for more details).
For natural number $n$ the quantization $[n]_q$ coincides with Euler's $q$-integer
$[n]_q = 1 + q +\dots + q^{n-1}.$

The question about convergence of the corresponding power series for all real $x$ was studied in \cite{LMGOV}, where it was proved that for any positive $x$ the radius of convergence $R(x)$ of the series~$\left[x\right]_q$
satisfies the inequality
$
R(x) > 3-2\sqrt{2}
$
and conjectured that the optimal lower bound for $R(x)$ is $R_*=\frac{3-\sqrt{5}}{2}$ holding only for the golden ratio and its equivalents.
The proof of this conjecture has recently been announced in \cite{EGMS}.

In this paper we study the rational numbers with maximal radius of convergence, which for non-integers is equal to 1.
The corresponding numbers we call {\it Kronecker fractions.} The reason is that the denominator $\mathcal{S}(q)$ of such fractions must be monic, has integer coefficients and all roots lying in the unit disc. Kronecker \cite{K} proved that such polynomials must be a product of cyclotomic polynomials, and/or a power of $q$.

The simplest examples of Kronecker fractions are $x=\frac{1}{n}$ with $$\left[ \frac{1}n \right]_q=\frac{q^{n-1}}{q^{n-1} +\cdots + q + 1}=\frac{q^{n-1}}{[n]_q},\qquad n\in\N,$$
but there are more intriguing families of examples like $[x]_q=\left[ \frac{n^2 + n -1}{n(n + 1)(n + 2)} \right]_q$ with the denominator $\mathcal{S}(q)=[n]_q[n + 1]_q[n + 2]_q(1 - q + q^2).$
In particular, for $n=4$ we have $$\left[ \frac{19}{120} \right]_q = \frac{q^6(1+2q+3q^2+3q^3+3q^4+3q^5+2q^6+q^7+q^8)}{[4]_q[5]_q[6]_q(q^2- q + 1)}.$$

We have also sporadic examples (not belonging to any known family) such as
$$\left[ \frac{49}{160} \right]_q = \frac{q^3(q^5 + q^4 + q^3 + 2q^2 + q + 1)(q^4 + q^3 + 2q^2 + 2q + 1)}{[4]_q^2[5]_q(q^2+1)}.$$

We prove that Kronecker fractions must have palindromic continued fractions and show that the Kronecker fractions with prime denominators must have the form $\frac{1}{p}$ or $\frac{1-p}{p}$.
We found all the Kronecker fractions with the denominator less than 5000 and several infinite families (see the Appendix for all cases known so far), but a complete description is still to be found. We finish with the discussion of the irrational case and of the relation with braids, rational knots and links.


\section{Quantum numbers}
\subsection{Quantum rationals}

    For a rational number $\frac{r}{s}$, we consider its \textit{regular 
continued fraction} by
    \begin{equation*}
        \frac{r}{s} = [a_1, a_2, a_3, \ldots, a_N] \coloneq a_1 + \cfrac{1}{a_2 + \cfrac{1}{a_3 + \cfrac{1}{\ddots + \cfrac{1}{a_N}}}},
    \end{equation*}
    where $a_1$ is an integer and $a_i$ is a positive integer for all $i \geq 2$. Since
$
  \frac{1}{a_N} = \frac{1}{a_N-1 + \frac{1}{1}}
$
we can always assume $\frac{r}{s} = [a_1,a_2,\ldots,a_{2m}]$
(i.e.~the length of the list of partial quotients is even).

\begin{definition} (Morier-Genoud, Ovsienko \cite{MGO1}) \label{def:1}
    Given a continued fraction $[a_1, a_2, \dots, a_{2m}]$, define 
its \textit{$q$-deformation} by 
    \begin{equation*}
        [a_1, a_2, \dots, a_{2m}]_q \coloneq [a_1]_q + \cfrac{q^{a_1}}{[a_2]_{q^{-1}} + \cfrac{q^{-a_2}}{[a_3]_q + \cfrac{q^{a_3}}{[a_4]_{q^{-1}} + \cfrac{q^{-a_4}}{\ddots + \cfrac{q^{a_{2m-1}}}{[a_{2m}]_{q^{-1}}}}}}}
    \end{equation*}
\end{definition}

\begin{example}
  $\frac38=[0,2,1,2]$, so
$$
    \left[ \frac38 \right]_q = \cfrac{1}{[2]_{q^{-1}} + \cfrac{q^{-2}}{[1]_q
+\cfrac{q}{[2]_{q^{-1}}}}}  
=\frac{q^{4} + q^{3} + q^{2}}{q^{4} + 2q^{3} + 2q^{2} + 2q + 1}.
$$
\end{example}

Clearly a quantum rational is a rational function of $q$:
\begin{equation*}
  \left[ \frac{r}s \right]_q = \frac{\mathcal{R}(q)}{\mathcal{S}(q)}.
\end{equation*}
In \cite{MGO1} it is further shown that:
\begin{itemize}
\item $\mathcal{R}(1)=r$, $\mathcal{S}(1)=s$,
\item $\mathcal{S}(0)=1$,
\item $\mathcal{R}$, $\mathcal{S}$ are coprime, monic polynomials with 
non-negative integer coefficients
\item $\deg(\mathcal{R}) = a_1 + a_2 + \cdots + a_{2m} - 1$ and
$
    \deg(\mathcal{S}) = \deg(\mathcal{R}) - a_1.
$
\end{itemize}
Note that $\mathcal{R}$ and $\mathcal{S}$ depend on both $r$ and $s$.
For example, the denominator ``$5$'' is quantised differently in
$\frac15$ and $\frac25$:
\begin{equation*}
  \left[ \frac15 \right]_q = \frac{q^4}{q^4+q^3+q^2+q+1},  \quad \left[ \frac25 \right]_q   =\frac{q^{3} + q^{2}}{q^{3} + 2q^{2} + q + 1}.
\end{equation*}

One can visualize these quantum rationals on the Conway topograph \cite{Conway} using the quantized local rules shown in Fig.~1.
\medskip

\begin{figure}[h]
\begin{center}
	\begin{tikzpicture}
	\node at (-1, 0.75) {$\frac{\mathcal{R}}{\mathcal{S}}$};
	\node at (1, 0.75) {$\frac{\mathcal{R'}}{\mathcal{S'}}$};
	\node at (0, 2.5) {$\frac{\mathcal{R}+q^\ell\mathcal{R'}}{\mathcal{S}+q^\ell\mathcal{S'}}$};
	
	\draw[thick] (0, 0) -- node[right, text=red] {$q^\ell$} (0, 1.5);
	\draw[thick] (0, 1.5) -- (1, 2.5);
	\draw[thick] (0, 1.5) -- (-1, 2.5);
	\end{tikzpicture}
	\quad \quad \quad
	\begin{tikzpicture}
	\node at (-1, 0.75) {$\frac{\mathcal{R}}{\mathcal{S}}$};
	\node at (1, 0.75) {$\frac{\mathcal{R'}}{\mathcal{S'}}$};
	
	\draw[thick] (0, 0) -- node[right, text=red] {$q^\ell$} (0, 1.5);
	\draw[thick] (0, 1.5) -- node[below, right, text=red] {$q^{\ell+1}$} (1, 2.5);
	\draw[thick] (0, 1.5) -- node[above, right, text=red] {$q$} (-1, 2.5);
	\end{tikzpicture}
  \end{center}
\medskip
  \begin{center}
	\begin{tikzpicture} 
    \node at (1.75,0.75) {$\frac{1}{0}$};
	\node at (-1.75,0.75){$\frac{0}{1}$};
	\node at (0,4) {$\frac{1}{1}$};
	\node at (3.11, 3.8) {$\frac{1+q}{1}$};
	\node at (2.30, 5.76) {$\frac{1+q+q^2}{1+q}$}; 
	\node at (4.37, 1.98) {$\frac{1+q+q^2}{1}$}; 
	\node at (0.87, 6.97) {$\frac{1+q+q^2+q^3}{1+q+q^2}$}; 
	\node at (4.38, 5.85) {$\frac{1+q+2q^2+q^3}{1+q+q^2}$}; 
	\node at (5.81, 3.35) {$\frac{1+2q+q^2+q^3}{1+q}$}; 
	\node at (4.43, 0.34) {$\frac{1+q+q^2+q^3}{1}$}; 
	\node at (-3.11, 3.8) {$\frac{q}{1+q}$};
	\node at (-2.10, 5.76) {$\frac{q+q^2}{1+q+q^2}$}; 
	\node at (-4.37, 2.18) {$\frac{q^2}{1+q+q^2}$}; 
	\node at (-0.87, 6.97) {$\frac{q+q^2+q^3}{1+q+q^2+q^3}$}; 
	\node at (-4.38, 5.85) {$\frac{q+q^2+q^3}{1+2q+q^2+q^3}$}; 
	\node at (-5.81, 3.35) {$\frac{q^2+q^3}{1+q+2q^2+q^3}$}; 
	\node at (-4.43, 0.34) {$\frac{q^3}{1+q+q^2+q^3}$}; 
	
	\draw[thick] (0, -0.5) -- node[right, text=red] {\small $1$} (0,2);
	
		\draw[thick] (0,2) -- node[below, text=red] {\small $q$} (1.73,3);
			
			\draw[thick] (1.73, 3) -- node[right, text=red] {\small $q$} (1.99, 4.48);	
	
				\draw[thick] (1.99, 4.48) -- node[above, text=red] {\small $q$} (1.31, 5.53);
				
					\draw[thick] (1.31, 5.53) -- node[above, text=red] {\small $q$} (0.36, 5.87);
					\draw[thick] (1.31, 5.53) -- node[right, text=red] {\small $q^2$} (1.23, 6.53);
					
				\draw[thick] (1.99, 4.48) -- node[below, text=red] {\small $q^2$} (2.98, 5.24);	
				
					\draw[thick] (2.98, 5.24) -- node[right, text=red] {\small $q$} (3.24, 6.21);
					\draw[thick] (2.98, 5.24) -- node[below, text=red] {\small $q^3$} (3.98, 5.24);
	
			\draw[thick] (1.73, 3) -- node[below, text=red] {\small $q^2$} (3.14, 2.49);
	
				\draw[thick] (3.14, 2.49) -- node[below, text=red] {\small $q$} (4.29, 2.97);
	
					\draw[thick] (4.29, 2.97) -- node[right, text=red] {\small $q$} (4.79, 3.84);
					\draw[thick] (4.29, 2.97) -- node[below, text=red] {\small $q^2$} (5.26, 2.71);
				
				\draw[thick] (3.14, 2.49) -- node[left, text=red] {\small $q^3$} (3.72, 1.38);
					
					\draw[thick] (3.72, 1.38) -- node[below, text=red] {\small $q$} (4.63, 0.96);
					\draw[thick] (3.72, 1.38) -- node[left, text=red] {\small $q^4$} (3.55, 0.39);
					
		\draw[thick] (0,2) -- node[above, text=red] {\small $q$} (-1.73,3);
		
			\draw[thick] (-1.73, 3) -- node[right, text=red] {\small $q^2$} (-1.99, 4.48);	
		
				\draw[thick] (-1.99, 4.48) -- node[right, text=red] {\small $q^3$} (-1.31, 5.53);
		
					\draw[thick] (-1.31, 5.53) -- node[below, text=red] {\small $q^4$} (-0.36, 5.87);
					\draw[thick] (-1.31, 5.53) -- node[right, text=red] {\small $q$} (-1.23, 6.53);
		
				\draw[thick] (-1.99, 4.48) -- node[above, text=red] {\small $q$} (-2.98, 5.24);	
		
					\draw[thick] (-2.98, 5.24) -- node[right, text=red] {\small $q^2$} (-3.24, 6.21);
					\draw[thick] (-2.98, 5.24) -- node[above, text=red] {\small $q$} (-3.98, 5.24);
		
			\draw[thick] (-1.73, 3) -- node[above, text=red] {\small $q$} (-3.14, 2.49);
		
				\draw[thick] (-3.14, 2.49) -- node[above, text=red] {\small $q^2$} (-4.29, 2.97);
		
					\draw[thick] (-4.29, 2.97) -- node[right, text=red] {\small $q^3$} (-4.79, 3.84);
					\draw[thick] (-4.29, 2.97) -- node[above, text=red] {\small $q$} (-5.26, 2.71);
		
				\draw[thick] (-3.14, 2.49) -- node[left, text=red] {\small $q$} (-3.72, 1.38);
		
					\draw[thick] (-3.72, 1.38) -- node[above, text=red] {\small $q^2$} (-4.63, 0.96);
					\draw[thick] (-3.72, 1.38) -- node[left, text=red] {\small $q$} (-3.55, 0.39);
	\end{tikzpicture}

\end{center}
\caption{Local rules and the quantized Conway-Farey Topograph} \label{fig:QFCT}
\end{figure}
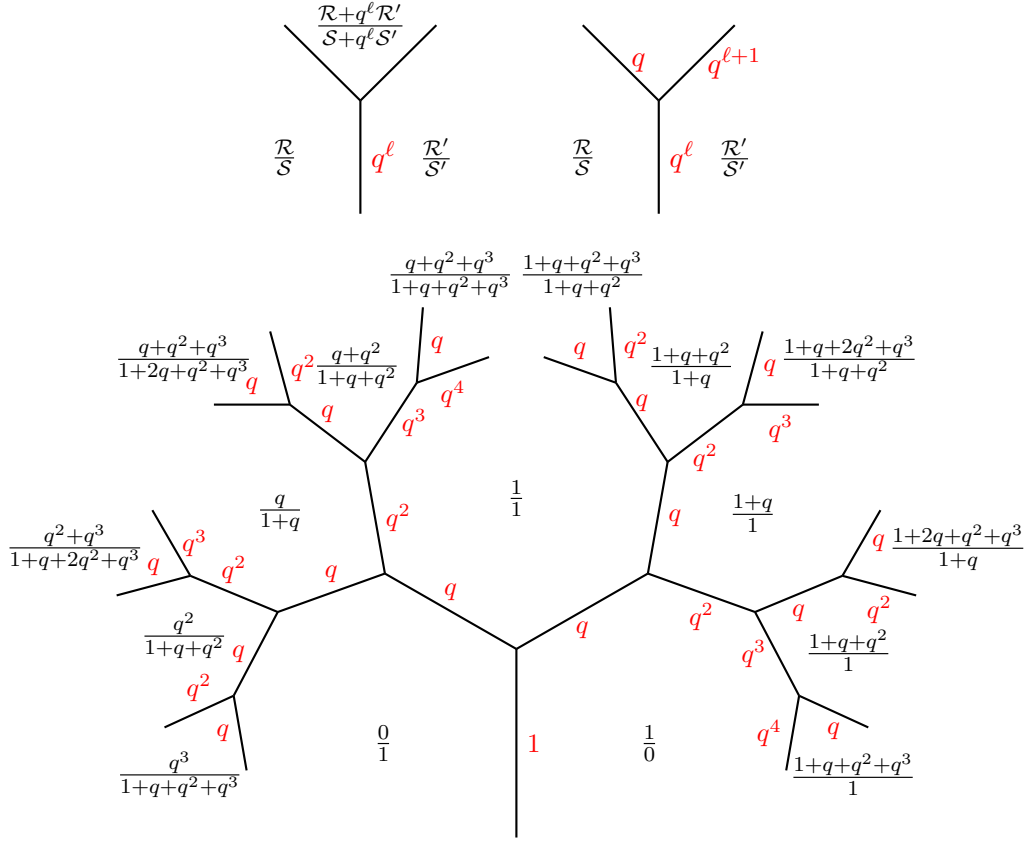


To calculate the polynomials $\mathcal{R}$ and $\mathcal{S}$ one can use the following notion of continuants. 

For $\frac{r}{s} = [a_1, a_2, \dots, a_{2m}]$ the {\it continuant} $K_{2m}^+$ is defined as the determinant
\begin{equation*}
    K_{2m}^+(a_1, \dots, a_{2m}) := \det\begin{pmatrix}
    a_1 & 1 & & & \\
    -1 & a_2 & 1 & & \\
    & \ddots & \ddots & \ddots & \\
    & & -1 & a_{2m-1} & 1 \\
    & & & -1 & a_{2m}
    \end{pmatrix}.
\end{equation*}
One can check that  
\begin{equation}
\label{rcont}
\begin{split}
    r &= K_{2m}^+(a_1, a_2, \dots, a_{2m}) \\
    s &= K_{2m-1}^+(a_2, a_3, \dots, a_{2m})
\end{split}
\end{equation}
Morier-Genoud and Ovsienko defined the $q$-continuants, with a distinction between the number of variables, as follows:
\begin{itemize}
\item For an even number of variables:
\begin{equation}
\label{evencont}
    K_{2m}^+(a_1, \dots, a_{2m})_q := \det{\begin{pmatrix}
    [a_1]_q & q^{a_1} & & & & & \\
    -1 & [a_2]_{q^{-1}} & q^{-a_2} & & & & \\
    & -1 & [a_3]_q & q^{a_3} & & & \\
    & & -1 & [a_4]_{q^{-1}} & q^{-a_4} & & \\
    & & & \ddots & \ddots & \ddots & \\
    & & & & -1 & [a_{2m-1}]_q & q^{a_{2m-1}} \\
    & & & & & -1 & [a_{2m}]_{q^{-1}}
    \end{pmatrix}}
\end{equation}
\item For an odd number of variables:
\begin{equation}
\label{oddcont}
    K_{2m-1}^+(a_2, \dots, a_{2m})_q := \det{\begin{pmatrix}
    [a_2]_{q^{-1}} & q^{-a_2} & & & & \\
    -1 & [a_3]_q & q^{a_3} & & & \\
    & -1 & [a_4]_{q^{-1}} & q^{-a_4} & & \\
    & & \ddots & \ddots & \ddots & \\
    & & & -1 & [a_{2m-1}]_q & q^{a_{2m-1}} \\
    & & & & -1 & [a_{2m}]_{q^{-1}}
    \end{pmatrix}}
\end{equation}
\end{itemize}

With this we have the following quantised versions of equations \eqref{rcont}
\begin{align*}
    \mathcal{R}(q) &= q^t K_{2m}^+(a_1, a_2, \dots, a_{2m})_q \\
    \mathcal{S}(q) &= q^t K_{2m-1}^+(a_2, a_3, \dots, a_{2m})_q,
\end{align*}
where $t = a_2 + a_4 + \dots + a_{2m} - 1$.

By expansion along the final row of these matrices (in both odd and even cases), we can define the continuants recursively by
\begin{equation}
\label{qcontrec}
    K_n^+(a_1, \dots, a_n)_q = K_{n-1}^+(a_1, \dots, a_{n-1})_{q^{-1}}[a_n]_{q^{-1}} + q^{a_{n-1}} K_{n-2}^+(a_1, \dots, a_{n-2})_q,
\end{equation}
with initial conditions $K_0^+()_q = 1, K_1^+(a_1)_q = [a_1]_q$. 

Extending the recursive formula \eqref{qcontrec}, we can determine the denominator of a $q$-rational $x\in(0,1)$ recursively as follows. For $x = [0, a_2, \dots, a_{2m+2}]$, with $[x]_q = \frac{\mathcal{R}(q)}{\mathcal{S}(q)}$ define $S_{2m+2} := \mathcal{S}(q)$. Then we have
\begin{multline}
\label{SQrecursion}
    S_{2m+2} = S_{2m} \left(\frac{[a_{2m+2}]_q}{ [a_{2m}]_q} + q 
[a_{2m+2}]_q [a_{2m+1}]_q + q^{a_{2m+1} + a_{2m+2}}\right)\\
  - S_{2m-2}\left(\frac{q^{a_{2m-1} + a_{2m}} [a_{2m+2}]_q}{[a_{2m}]_q}\right),
\end{multline}
with the initial conditions
\begin{equation}
\label{SQinitial}
    S_2 = [a_2]_q, \qquad S_4 = q[a_2]_q[a_3]_q[a_4]_q + q^{a_3 + a_4}
[a_2]_q + [a_4]_q.
\end{equation}

We have also the following general formulas, see \cite{LMG}:
\begin{itemize}
    \item Shift formula, for $n \in \mathbb{N}$,
        \begin{equation}
        \label{shift}
            [x + n]_q = q^n [x]_q + [n]_q.
        \end{equation}
    \item Negation formula,
        \begin{equation}
        \label{negation}
            [-x]_q = -q^{-1} [x]_{q^{-1}}.
        \end{equation}
    \item Inversion formula,
        \begin{equation}
        \label{inversion}
            \left[\frac{1}{x}\right]_q = \frac{1}{[x]_{q^{-1}}}.
        \end{equation}
\end{itemize}

Alternatively, for a rational number $\frac{r}{s}$, we can consider its \textit{Hirzebruch (negative) continued fraction} by
\begin{equation*}
        \frac{r}{s} = \llbracket c_1, c_2, c_3, \ldots, c_k \rrbracket
 \coloneq c_1 - \cfrac{1}{c_2 - \cfrac{1}{c_3 - \cfrac{1}{\ddots - \cfrac{1}{c_k}}}},
\end{equation*}
where $c_1$ is an integer and $c_i \geq 2$ is a positive integer for all $i \geq 2$.

\begin{definition} (Morier-Genoud, Ovsienko \cite{MGO1})  \label{def:2}
    Given a Hirzebruch continued fraction $[c_1, c_2, \dots, c_k]$, define 
its \textit{$q$-deformation} by 
    \begin{equation*}
        \llbracket c_1, c_2, \dots, c_k \rrbracket_q \coloneq [c_1]_q - \cfrac{q^{c_1 - 1}}{[c_2]_q - \cfrac{q^{c_2 - 1}}{\ddots - \cfrac{q^{c_{k-1} - 1}}{[c_k]_q}}}
    \end{equation*}
\end{definition}
It is proved in \cite{MGO1} that definitions \ref{def:1} and \ref{def:2} are
compatible, in the sense that if $[a_1,\ldots,a_{2m}]$ and 
$\llbracket c_1,\ldots,c_n\rrbracket$ represent the same rational number
then
\begin{displaymath}
  [a_1,\ldots,a_{2m}]_q = \llbracket c_1,\ldots,c_n\rrbracket_q
\end{displaymath}
as rational functions of $q$.

We can go between regular and Hirzebruch continued fractions using the following correspondence 
\begin{equation}
\label{CF Conversion}
[a_1 , a_2, \dots, a_{2m}] =
\llbracket a_1+1, 2, \dots, 2, a_3 + 2, 2, \ldots, 2,
 \cdots, a_{2m-1} + 2, 2, \dots, 2\rrbracket,
\end{equation}
where the strings of repeating $2$'s have length $a_2 - 1, a_4 - 1, \dots, a_{2m} - 1$.
In terms of Hirzebruch partial quotients, we have 
\begin{equation*}
    \deg \mathcal{R}(q) = c_1 + c_2 + \dots c_k - k \qquad \text{and} \qquad \deg \mathcal{S}(q) = c_2 + \dots c_k - k + 1
\end{equation*}
Due to the nature of the definition of $q$-rationals using the Hirzebruch form being more straightforward (i.e., not dealing with alternating $q$ and $q^{-1}$ terms), it will often be easier to use this form in calculations.

As with the regular continued fraction there is a continuant matrix in the form
\begin{equation*}
    K_{k}(c_1, \dots, c_k) := \det{\begin{pmatrix}
    c_1 & 1 & & & \\
    1 & c_2 & 1 & & \\
    & \ddots & \ddots & \ddots & \\
    & & 1 & c_{k-1} & 1 \\
    & & & 1 & c_k
    \end{pmatrix}},
\end{equation*}
and 
\begin{equation}
\label{HIRcont}
\begin{split}
    r &= K_k(c_1, c_2, \dots, c_k) \\
    s &= K_{k-1}(c_2, c_3, \dots, c_k)
\end{split}
\end{equation}
The quantum version of $K_k$ was worked-out in \cite{MGO1}:
\begin{equation*}
    K_k(c_1, \dots, c_k)_q := \det{\begin{pmatrix}
    [c_1]_q & q^{c_1 - 1} & & & \\
    1 & [c_2]_q & q^{c_2 - 1} & & \\
    & \ddots & \ddots & \ddots & \\
    & & 1 & [c_{k-1}]_q & q^{c_{k-1} - 1} \\
    & & & 1 & [c_k]_q
    \end{pmatrix}}
\end{equation*}
and equations \eqref{HIRcont} become
\begin{equation}
\label{eq:RS_continuants}
\begin{split}
    \mathcal{R}(q) &= K_k(c_1, c_2, \dots, c_k)_q \\
    \mathcal{S}(q) &= K_{k-1}(c_2, c_3, \dots, c_k)_q.
\end{split}
\end{equation}

Now, in both regular and Hirzebruch form we can determine the rational (polynomial) instead as a product of $2 \times 2$ matrices. We will focus here on the Hirzebruch form only, however the regular continued fraction case is similar and both can be found in \cite{MGO1}.

In general, for a rational $\frac{r}{s} = \llbracket c_1, c_2, \dots, c_k
\rrbracket$ we define the matrix $M(c_1, c_2, \dots, c_k)$ as
\begin{align*}
    M(c_1, c_2, \dots, c_k) &\coloneq 
    \begin{pmatrix}
        c_1 & -1 \\
        1 & 0
    \end{pmatrix}
    \begin{pmatrix}
        c_2 & -1 \\
        1 & 0
    \end{pmatrix}
    \dots
    \begin{pmatrix}
        c_k & -1 \\
        1 & 0
    \end{pmatrix} \\
    &= \begin{pmatrix}
        r & -r_{k-1} \\
        s & -s_{k-1}
    \end{pmatrix}
\end{align*}
where $r_{k-1}, s_{k-1}$ represent the numerator and denominator of the convergent consisting of just the first $k-1$ partial quotients.

Similarly, the $q$-analogue is defined to be
\begin{align}
\nonumber
    M_q(c_1, c_2, \dots, c_k) &\coloneq 
    \begin{pmatrix}
        [c_1]_q & -q^{c_1 - 1} \\
        1 & 0
    \end{pmatrix}
    \begin{pmatrix}
        [c_2]_q & -q^{c_2 - 1} \\
        1 & 0
    \end{pmatrix}
    \dots
    \begin{pmatrix}
        [c_k]_q & -q^{c_k - 1} \\
        1 & 0
    \end{pmatrix} \\
\label{MqHir}
    &= \begin{pmatrix}
        \mathcal{R} & -q^{c_k - 1} \mathcal{R}_{k-1} \\
        \mathcal{S} & -q^{c_k - 1} \mathcal{S}_{k-1}
    \end{pmatrix}.
\end{align}

\subsection{Quantum irrationals}
Let now $x\in\R$ be irrational, and $(x_n)\subseteq\Q$ a sequence of rationals
with $x_n\to x$ as $n\to\infty$.  Morier-Genoud and Ovsienko \cite{MGO2} proved that
the sequence of quantised $([x_n]_q)$ \emph{stabilises}. i.e. more and 
more terms of the Taylor expansion in $q$ become fixed.

Therefore one can define the quantisation of $x$ by
    \begin{displaymath}
        [x]_q := \sum_{k \geq 0} \varkappa_k q^k, \qquad \text{where} \qquad \varkappa_k = \lim_{n \rightarrow \infty} \varkappa_{n, k}
    \end{displaymath}
where
    \begin{displaymath}
        [x_n]_q =: \sum_{k\geq 0} \varkappa_{n, k}q^k,
    \end{displaymath}
and the sequence of coefficients $(\varkappa_k)$ are integers,
independent of the choice of approximation sequence $(x_n)$.

For example, for the golden ratio $\varphi=\frac{1+\sqrt5}{2} = [1,1,1, \ldots]$
we have $[\varphi]_q$, satisfying the relation
$$
[\varphi]_q = [1]_q+\cfrac{q}{[1]_{q^{-1}} +
  \cfrac{q^{-1}}{[1]_q+ \cfrac{q}{[1]_{q^{-1}} + \ddots}}}
= 1 + \cfrac{q}{1+\cfrac{q^{-1}}{[\varphi]_q}}.
$$
This gives the quantum golden ratio as
$$
\left[\varphi\right]_q=\frac{q^2+q-1+\sqrt{(q^2 + 3q + 1)(q^2 - q + 1)}}{2q},
$$
or, as the series
$$
\begin{array}{rcl}
\left[\varphi\right]_q
&=&
1 + q^2 - q^3 + 2 q^4 - 4 q^5 + 8 q^6 - 17 q^7 + 37 q^8 - 82 q^9 + 185 q^{10} \\[6pt]
&&- 423 q^{11} + 978 q^{12}-2283q^{13}+ 5373q^{14}-12735q^{15}+30372q^{16}\\[6pt]
&&-72832q^{17}+175502q^{18}-424748q^{19}+1032004q^{20} \cdots
\end{array}
$$
with the sequence of coefficients in~$\left[\varphi\right]_q$ coinciding (up to the alternating sign)
with the sequence \href{https://oeis.org/A004148}{A004148} of \cite{OEIS} called the ``generalized Catalan numbers''.

The radius of convergence of this power series is governed by the
closest root of $1+3q+q^2=0$ to $0$, and is
\begin{displaymath}
  R_* = \frac{3-\sqrt5}2.
\end{displaymath}

In  \cite{LMGOV} it was conjectured that for any real $x>0$,
the radius of convergence of $[x]_q$ is at least $R_*$, which can be considered as the quantum analogue of Hurwitz' theorem, that $\varphi$ is the
most badly-approximable number. This conjecture was proved for \emph{metallic numbers}
of the form $[0,n,n,n,\ldots]$, $n\in\N$ in \cite{Ren} and in the general case in \cite{EGMS}.

\section{Maximal radius of convergence and Kronecker fractions}
\label{sec:drei}

Since the minimal radius of convergence $R(x)$ of $[x]_q$ is now known to be
$R_*$, it is natural to ask what is the maximal radius of convergence.

We should exclude from consideration positive integers $n$ since $[n]_q$ is a polynomial and hence
$R(n)=\infty$.

\begin{proposition}
  For all $x\in \R_+, $ $x\not\in \Z$, the radius of convergence $R(x) \leq 1$.
\end{proposition}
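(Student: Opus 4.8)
The plan is to combine two facts: that the radius of convergence $R(x)$ is the modulus of the singularity of $[x]_q$ nearest the origin, and that the Taylor coefficients of $[x]_q$ are integers. The engine of the whole argument is the following elementary lemma, which I would state and prove first: if $f(q)=\sum_{k\ge0}\varkappa_k q^k$ has $\varkappa_k\in\Z$ and is not a polynomial, then its radius of convergence is at most $1$. Indeed, were $R>1$ we would have $\limsup_k|\varkappa_k|^{1/k}=1/R<1$, so $|\varkappa_k|\le\rho^k<1$ for some $\rho\in(1/R,1)$ and all large $k$; since the $\varkappa_k$ are integers this forces $\varkappa_k=0$ eventually, contradicting non-termination. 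Thus the proposition reduces to checking that $[x]_q$ has integer coefficients and is not a polynomial.

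For rational $x=\frac rs$ with $s\ge2$ both points are immediate. Writing $[x]_q=\frac{\mathcal R(q)}{\mathcal S(q)}$, the facts recorded above give that $\mathcal S$ is monic with $\mathcal S(0)=1$ and integer coefficients, so $1/\mathcal S$, and hence $\mathcal R/\mathcal S$, has an integer Taylor expansion; and since $x\notin\Z$ forces $\mathcal S(1)=s\ge2$, the polynomial $\mathcal S$ is non-constant and coprime to $\mathcal R$, so $\mathcal R/\mathcal S$ has a genuine pole and is not a polynomial. In fact the rational case admits an even more explicit bound that I would include as it explains \emph{why} $1$ is the threshold: $R(x)=\min_i|\alpha_i|$ over the roots $\alpha_1,\dots,\alpha_d$ of $\mathcal S$, and since $\mathcal S$ is monic with $\mathcal S(0)=1$ one has $\prod_{i=1}^d|\alpha_i|=|\mathcal S(0)|=1$, whence the minimum modulus is at most the geometric mean $1$. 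Both routes give $R(x)\le1$, and they make transparent that equality ($R=1$) corresponds exactly to all roots of $\mathcal S$ lying on the unit circle, which is precisely the Kronecker-fraction situation motivating the rest of the paper.

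The main obstacle is the irrational case, where the coefficients are known to be integers but one must still rule out the degenerate possibility that the stabilised series terminates. Here I would argue by approximation: choosing rationals $x_n=\frac{r_n}{s_n}\to x$ with $s_n\to\infty$, the denominators satisfy $\deg\mathcal S_n=\deg\mathcal R_n-a_1\to\infty$ through the continuant bookkeeping of the previous subsection, so the pole structure of $[x_n]_q$ grows without bound and cannot stabilise to a polynomial; concretely, the stabilisation phenomenon (``more and more terms become fixed'') itself signals infinitely many nonzero limiting coefficients $\varkappa_k$. Making this link between the unbounded growth of $\deg\mathcal S_n$ and the non-termination of the limiting series fully rigorous is the one genuinely delicate step; once it is secured, the integer-coefficient lemma closes the argument uniformly for every $x\in\R_+\setminus\Z$.
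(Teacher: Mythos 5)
Your treatment of the rational case is correct, and in fact your ``more explicit bound'' \emph{is} the paper's proof: since $\mathcal{S}$ is monic with $\mathcal{S}(0)=1$, the product of the moduli of its roots equals $1$, so some root has modulus at most $1$, and coprimality of $\mathcal{R}$ and $\mathcal{S}$ guarantees this root is a genuine pole of $[x]_q$; hence $R(x)\leq 1$. Your integer-coefficient lemma is also sound and is essentially the paper's argument for the irrational case in a different guise (the paper phrases it as divergence of the series at $q=1$, which is exactly what infinitely many nonzero integer coefficients force). So for rational $x$ there is nothing to object to.

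The genuine gap --- which you flag yourself --- is the irrational case: you must show that the stabilised series $[x]_q$ does not terminate, and your proposed bridge does not carry the weight. The inference ``$\deg\mathcal{S}_n\to\infty$, so the pole structure of $[x_n]_q$ grows without bound and cannot stabilise to a polynomial'' is a non sequitur, because stabilisation is a purely coefficientwise statement and transfers no pole or degree information to the limit: for example, the rational functions $f_n(q)=1+q^n/(1-q)^n$ have denominator degree $n\to\infty$ and a pole at $q=1$ for every $n$, yet they stabilise coefficientwise to the polynomial $1$. Likewise your remark that the stabilisation phenomenon ``itself signals infinitely many nonzero limiting coefficients'' is false --- the coefficients of any stabilising sequence become fixed, including sequences whose limit is a polynomial or even zero. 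The paper handles this step by asserting (on the strength of the construction in \cite{MGO2}) that for irrational $x$ the series $[x]_q$ has infinitely many nonzero integer coefficients, whence it diverges at $q=1$; that non-termination input is precisely what your write-up is missing, and without it (either quoted from \cite{MGO2} or proved, e.g.\ by tracking the exact order at which consecutive convergents' series first differ) the irrational half of the proposition remains open in your argument.
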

\begin{proof}
Indeed, for rational $x=r/s$ by the Fundamental Theorem of Algebra the denominator $\mathcal{S}(q)$ of $[r/s]_q$
has a complex root.  Since $\mathcal{S}(0)=1$ and $\mathcal{S}$ is monic, not all 
roots can have modulus $>1$, so $R(x) \leq 1$.

For $x\not\in\Q$, $[x]_q$ diverges at $q=1$ since there are 
infinitely many integer coefficients.
\end{proof}

We would like to describe explicitly all rational numbers $x=r/s$ having maximal radius of convergence $R(x)=1$, which means that the denominator $\mathcal{S}(q)$ in $\left[ \frac{r}s \right]_q = \frac{\mathcal{R}(q)}{\mathcal{S}(q)}$ has all zeros on the unit circle.
 We call such rationals {\it Kronecker fractions} because of the following classical result.

\begin{theorem} (Kronecker \cite{K}) If $P(q)$ is a monic polynomial with integer coefficients
with all roots of absolute value at most $1$, then $P(q)$ is a product
of cyclotomic polynomials, and/or a power of $q$. 
\end{theorem}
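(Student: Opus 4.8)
The plan is to reduce the statement to the assertion that every nonzero root of $P$ is a root of unity, and then to prove this via a finiteness (pigeonhole) argument on power-maps applied to the roots. First I would strip off the zero roots: write $P(q)=q^m Q(q)$ with $Q(0)\neq 0$, where $Q$ is again monic with integer coefficients and all roots of modulus at most $1$. The factor $q^m$ is the ``power of $q$'' allowed in the conclusion, so it suffices to show that $Q$ is a product of cyclotomic polynomials. Let $\alpha_1,\dots,\alpha_n$ be the roots of $Q$ (with multiplicity). Since $\prod_i\alpha_i=\pm Q(0)$ is a nonzero integer and each $|\alpha_i|\le 1$, the product has modulus $\le 1$ and so equals $\pm 1$; this forces $|\alpha_i|=1$ for every $i$. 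Thus all roots of $Q$ lie exactly on the unit circle.

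Next I would introduce, for each $k\ge 1$, the monic degree-$n$ polynomial $Q_k(q):=\prod_{i=1}^n (q-\alpha_i^k)$, with $Q_1=Q$. The crucial observation is that the whole family $\{Q_k\}_{k\ge1}$ is finite. Its coefficients are, up to sign, the elementary symmetric functions $e_j(\alpha_1^k,\dots,\alpha_n^k)$; these are symmetric polynomials in the $\alpha_i$ with integer coefficients, hence by the fundamental theorem of symmetric polynomials they are integer polynomials in the elementary symmetric functions of the $\alpha_i$, which are precisely $\pm$ the (integer) coefficients of $Q$. Therefore each $Q_k\in\Z[q]$. At the same time, because every $\alpha_i^k$ lies on the unit circle, we have $|e_j(\alpha_1^k,\dots,\alpha_n^k)|\le\binom{n}{j}$, so the coefficients of $Q_k$ are integers bounded independently of $k$. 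Only finitely many such polynomials exist.

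By the pigeonhole principle there are integers $1\le k<l$ with $Q_k=Q_l$, i.e.\ $\{\alpha_i^k\}=\{\alpha_i^l\}$ as multisets. Hence there is a permutation $\sigma$ of $\{1,\dots,n\}$ with $\alpha_i^l=\alpha_{\sigma(i)}^k$ for all $i$. Iterating gives $\alpha_i^{l^t}=\alpha_{\sigma^t(i)}^{k^t}$, so taking $t$ to be the order of $\sigma$ yields $\alpha_i^{l^t}=\alpha_i^{k^t}$, that is $\alpha_i^{\,l^t-k^t}=1$. As $l>k\ge1$ the exponent $l^t-k^t$ is a positive integer, so every $\alpha_i$ is a root of unity. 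Consequently the minimal polynomial of each $\alpha_i$ is cyclotomic, and since $Q$ is monic with integer coefficients it factors as a product of these cyclotomic polynomials; together with the stripped factor $q^m$ this gives the claim.

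I expect the main obstacle to be the finiteness step: one must verify carefully that each $Q_k$ has integer coefficients (the symmetric-function argument) and simultaneously that those coefficients are uniformly bounded, since it is the combination of integrality with boundedness that makes the family finite and drives the whole proof. The concluding iteration that converts the repetition $Q_k=Q_l$ into the relation $\alpha_i^{\,l^t-k^t}=1$ is short but is the decisive trick.
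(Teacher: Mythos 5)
The paper does not prove this statement at all: it is quoted as a classical result with a citation to Kronecker's 1857 paper, so there is no internal proof to compare against. Your argument is correct and is, in essence, the standard (indeed Kronecker's original) proof. All the key steps check out: stripping $q^m$ and using $|Q(0)|\geq 1$ together with $\prod_i |\alpha_i|\leq 1$ to force every root onto the unit circle; integrality of the $Q_k$ via the fundamental theorem of symmetric polynomials combined with the uniform bound $|e_j(\alpha_1^k,\dots,\alpha_n^k)|\leq\binom{n}{j}$ to make the family $\{Q_k\}$ finite; and the permutation-iteration converting $Q_k=Q_l$ into $\alpha_i^{\,l^t-k^t}=1$ with $t$ the order of $\sigma$. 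One small point you could make fully explicit at the end: after concluding each $\alpha_i$ is a root of unity, the factorization of $Q$ into cyclotomics uses Gauss's lemma — the monic irreducible factors of $Q$ over $\Q$ lie in $\Z[q]$, and each, having a root of unity as a root, equals some $\Phi_d$. This is a routine gloss, not a gap.
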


Recall that the $n$th \emph{cyclotomic polynomial} is defined by
  \begin{displaymath}
    \Phi_n(x) = \prod_{\substack{k=1\\\gcd(k,n)=1}}^n (x - 
\rme^{2\pi\rmi k/n}):
  \end{displaymath}
for example
$$
  \Phi_1(x) = x-1, \,\,
  \Phi_2(x) = x+1, \,\,
  \Phi_3(x) = x^2+x+1, \dots,
  \Phi_{10}(x)= x^4-x^3+x^2-x+1,\dots
$$
Note that apart from $\Phi_1$, all cyclotomic polynomial have \emph{palindromic coefficients} \cite{B}.
\begin{definition}
  A polynomial $P(q)$ is called \emph{palindromic of degree $d$} if 
  \begin{equation*}
    P(q) = q^d P\left(\frac1q\right).
  \end{equation*}
If $d$ is the degree of $P$ itself, then we say simply $P$ is
palindromic.
\end{definition}
\begin{remark}
  Allowing to specify the degree of palindromicity presents us with extra
flexibility. For example the polynomial $P(q)=q^2+q$ is not palindromic
(as a degree $2$ polynomial), but it is palindromic of degree $3$:
\begin{displaymath}
  P(q) = 0q^3 + q^2 + q + 0.
\end{displaymath}
\end{remark}

We can reduce our search for Kronecker fractions to $x\in(0,\frac12)$ due to the following:

\begin{proposition}
  If $x\in \Q\cap(0,1)$ is Kronecker then so is $[x+n]_q$ and $[1-x]_q$,
for $n\in\N$.
\end{proposition}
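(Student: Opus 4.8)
The plan is to show that each of the two transformations preserves the Kronecker property by tracking its effect on the denominator polynomial $\mathcal{S}(q)$. Recall that a fraction is Kronecker precisely when its denominator $\mathcal{S}(q)$ has all roots on the unit circle, or equivalently (by Kronecker's theorem, since $\mathcal{S}$ is monic with integer coefficients and $\mathcal{S}(0)=1$) when $\mathcal{S}(q)$ is a product of cyclotomic polynomials $\Phi_n$ with $n\geq 2$. The key tool is that the shift and negation formulas \eqref{shift} and \eqref{negation} (together with $[1-x]_q$ being built from these) tell us explicitly how $[x]_q$ transforms, and I would extract from each the resulting transformation of the denominator.

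First I would handle the shift $x\mapsto x+n$. Writing $[x]_q = \mathcal{R}(q)/\mathcal{S}(q)$, the shift formula \eqref{shift} gives
\begin{equation*}
  [x+n]_q = q^n\frac{\mathcal{R}(q)}{\mathcal{S}(q)} + [n]_q = \frac{q^n\mathcal{R}(q) + [n]_q\,\mathcal{S}(q)}{\mathcal{S}(q)}.
\end{equation*}
The crucial point is that the denominator is \emph{unchanged}: it remains $\mathcal{S}(q)$, so its roots are exactly the same and the Kronecker property is trivially inherited. The only thing to verify is that no new cancellation occurs that would alter the reduced denominator, but since $\mathcal{R}$ and $\mathcal{S}$ are coprime and $\gcd(q^n\mathcal{R}+[n]_q\mathcal{S},\,\mathcal{S}) = \gcd(q^n\mathcal{R},\mathcal{S})=1$ (as $\gcd(q,\mathcal{S})=1$ because $\mathcal{S}(0)=1$), the fraction is already in lowest terms. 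Note the statement refers to $x+n$ for $x\in(0,1)$; strictly one should observe $x+n\notin\Z$, so this is a genuine non-integer Kronecker fraction.

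For the reflection $x\mapsto 1-x$, I would combine the negation and shift formulas. Using \eqref{negation} and then \eqref{shift},
\begin{equation*}
  [1-x]_q = q\,[-x]_q + [1]_q = q\bigl(-q^{-1}[x]_{q^{-1}}\bigr) + 1 = 1 - [x]_{q^{-1}}.
\end{equation*}
Now if $[x]_q = \mathcal{R}(q)/\mathcal{S}(q)$ with $\deg\mathcal{S}=d$ and $\mathcal{S}$ having all roots on the unit circle, then $[x]_{q^{-1}} = \mathcal{R}(q^{-1})/\mathcal{S}(q^{-1})$, and clearing powers of $q$ shows the denominator of $[1-x]_q$ is (up to the coprimality adjustment and a monic normalisation) the reciprocal polynomial $q^{d}\mathcal{S}(q^{-1})$. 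The main obstacle, and the step requiring care, is precisely this normalisation: $\mathcal{S}(q^{-1})$ is not a polynomial, so one must multiply through by the appropriate power of $q$ to recover an honest monic integer polynomial, and then check that this reciprocal polynomial again has all roots on the unit circle. This last point is immediate once set up correctly, because the map $z\mapsto 1/\bar z$ (equivalently $z\mapsto 1/z$ for the root locations) fixes the unit circle: the roots of the reciprocal of $\mathcal{S}$ are the reciprocals of the roots of $\mathcal{S}$, and reciprocals of unit-modulus numbers again have modulus $1$. Equivalently, one may invoke that a product of cyclotomic polynomials is palindromic (noted in the excerpt), so its reciprocal is itself up to normalisation, keeping us inside the cyclotomic family. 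I would conclude by confirming that the resulting denominator is still monic with $\mathcal{S}(0)=1$ after normalisation, so that Kronecker's theorem applies and $1-x$ is Kronecker.
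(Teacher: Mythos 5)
Your proposal is correct and follows essentially the same route as the paper's own proof: the shift formula \eqref{shift} shows the denominator is unchanged under $x\mapsto x+n$, and combining negation \eqref{negation} with the shift gives $[1-x]_q = 1-[x]_{q^{-1}}$, whose denominator is the reciprocal polynomial $q^{d}\mathcal{S}(q^{-1})$ with roots inverse to those of $\mathcal{S}$, hence still on the unit circle. Your explicit coprimality check $\gcd(q^n\mathcal{R}+[n]_q\mathcal{S},\,\mathcal{S})=1$ is a small verification the paper leaves implicit, but otherwise the arguments coincide.
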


\begin{proof}
    Writing $[x]_q = \frac{\mathcal{R}(q)}{\mathcal{S}(q)}$ and applying the shift formula \eqref{shift} we have 
$$
        [x + n]_q = q^n \frac{\mathcal{R}(q)}{\mathcal{S}(q)} + [n]_q 
        = \frac{q^n \mathcal{R}(q) + [n]_q \mathcal{S}(q)}{\mathcal{S}(q)}.
$$
    So $[x]_q$ and $[x + n]_q$ have the same denominator, and thus if $[x]_q$ is Kronecker then the same is true for $[x + n]_q$.

    Combining the shift and negation \eqref{negation} we find that
$$
        [n - x]_q = q^n [-x]_q + [n]_q 
        = q^n (-q^{-1} [x]_{q^{-1}}) + [n]_q
        = -q^{n-1} [x]_{q^{-1}} + [n]_q.
$$
  In particular, for $n = 1$ we have
\begin{equation}
        [1 - x]_q = -[x]_{q^{-1}} + 1 \\
    \label{negationshift}
        = -\frac{\mathcal{R}(q^{-1})}{\mathcal{S}(q^{-1})} + 1.
\end{equation}

    Using the formulas for the degree of $\mathcal{R}, \mathcal{S}$  and noting that $a_1 = 0$ (since $x < 1$) we have
    $k = \deg{(\mathcal{R})} = \deg{(\mathcal{S})}.$ Define the reciprocals of the polynomials $\mathcal{R}, \mathcal{S}$ as 
    $$\mathcal{R}^*=q^k \mathcal{R}(q^{-1}), \mathcal{S}^*=q^k \mathcal{S}(q^{-1})$$ 
 to rewrite equation \eqref{negationshift} as
$$
        [1 - x]_q = -\frac{\mathcal{R}^*(q)}{\mathcal{S}^*(q)} + 1 
        = \frac{\mathcal{S}^*(q) - \mathcal{R}^*(q)}{\mathcal{S}^*(q)}.
$$
    Since the roots of $\mathcal{S}^*$ are inverse to the roots of $\mathcal{S}$, $1-x$ is also Kronecker.
\end{proof}

We have also the following interchange formula.

\begin{proposition}  \label{prop:interchange}
If $x=[0,a,b,a,b,\ldots,a], \,\, a,b \in \mathbb N$ is a Kronecker fraction, then so is $x^\dagger=[0,b,a,b,a,\ldots,b]$.
Furthermore, their quantum denominators are $[a]_q f(q)$ and $[b]_q f(q)$ respectively for some Kronecker polynomial $f(q)$.
\end{proposition}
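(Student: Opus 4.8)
The plan is to reduce the statement to an explicit factorisation of the two quantum denominators and then to read off the Kronecker property from Kronecker's theorem. I would work throughout in the Hirzebruch picture, whose transfer matrices $M_q$ in \eqref{MqHir} avoid the alternating $q^{\pm1}$ of the regular form. Converting $x=[0,a,b,a,\ldots,a]$ (say its tail has $k$ copies of $a$ and $k-1$ copies of $b$) through the correspondence \eqref{CF Conversion} turns its tail into the Hirzebruch string
\[
 D_x=(\underbrace{2,\ldots,2}_{a-1},\;b+2,\;\underbrace{2,\ldots,2}_{a-1},\;b+2,\;\ldots,\;b+2,\;\underbrace{2,\ldots,2}_{a-1}),
\]
consisting of $k$ blocks of $(a-1)$ twos separated by $k-1$ copies of $b+2$. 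By \eqref{eq:RS_continuants} the denominator is the continuant of this string, i.e.\ the $(1,1)$-entry $\mathcal{S}_x=[M_q(D_x)]_{11}$, and grouping the product blockwise gives $M_q(D_x)=P_a(W_bP_a)^{k-1}=C_q^{\,k-1}P_a$, where $P_a:=M_q(2)^{a-1}$, $W_b:=M_q(b+2)$ and $C_q:=P_aW_b$.

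A short induction yields the closed form
\[
 P_a=\begin{pmatrix}[a]_q & -q[a-1]_q\\ [a-1]_q & -q[a-2]_q\end{pmatrix},
\]
so that $[P_a]_{11}=[a]_q$ and $\det P_a=q^{a-1}$; hence $\det C_q=q^{a+b}$ and a one-line computation gives the trace $\tau_q:=\Tr C_q=[a]_q[b+2]_q-q(1+q^b)[a-1]_q$. The crux of the whole argument is that both invariants of $C_q$ are symmetric under the interchange $a\leftrightarrow b$: the determinant $q^{a+b}$ trivially, and the trace because
\[
 [a]_q[b+2]_q-q(1+q^b)[a-1]_q=[b]_q[a+2]_q-q(1+q^a)[b-1]_q .
\]
I would prove this identity by clearing $(q-1)^{-2}$ through $[n]_q=(q^n-1)/(q-1)$, after which both sides reduce to the visibly symmetric polynomial $q^{a+b+2}-q^{a+b+1}+q^{a+b}-q^{a+1}-q^{b+1}+q^2-q+1$.

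The factorisation then follows from Cayley--Hamilton. Since $C_q^2=\tau_qC_q-q^{a+b}I$, one has $C_q^{\,k-1}=p_{k-2}C_q-q^{a+b}p_{k-3}I$, where the Chebyshev-type polynomials $p_n$ obey $p_{-1}=0$, $p_0=1$, $p_n=\tau_qp_{n-1}-q^{a+b}p_{n-2}$ and depend only on the symmetric pair $(\tau_q,q^{a+b})$. Combining $[P_a]_{11}=[a]_q$ with the base relation $[C_qP_a]_{11}=[P_aW_bP_a]_{11}=[a]_q\tau_q$ (a direct computation; the factor $[a]_q$ appears because $[P_aW_b]_{12}=-q^{b+1}[a]_q$) gives
\[
 \mathcal{S}_x=[C_q^{\,k-1}P_a]_{11}=p_{k-2}[a]_q\tau_q-q^{a+b}p_{k-3}[a]_q=[a]_q\,p_{k-1},
\]
so $f:=p_{k-1}(\tau_q,q^{a+b})$ satisfies $\mathcal{S}_x=[a]_qf$. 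Repeating the computation for $x^\dagger$ merely swaps $(P_a,W_b)$ for $(P_b,W_a)$, replacing $C_q$ by $P_bW_a$; its trace and determinant are again $\tau_q$ and $q^{a+b}$ by the symmetry above, whence $\mathcal{S}_{x^\dagger}=[b]_q\,p_{k-1}(\tau_q,q^{a+b})=[b]_qf$ with the \emph{same} $f$.

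It remains to transfer the Kronecker property. If $x$ is Kronecker then $\mathcal{S}_x$ is a product of cyclotomic polynomials (it is monic with $\mathcal{S}_x(0)=1$, so no power of $q$ occurs); as $[a]_q=\prod_{d\mid a,\,d>1}\Phi_d$ is such a product dividing $\mathcal{S}_x$, the cofactor $f=\mathcal{S}_x/[a]_q$ is again a product of cyclotomics, i.e.\ a Kronecker polynomial. Then $\mathcal{S}_{x^\dagger}=[b]_qf$ is a product of cyclotomics as well, all its roots lie on the unit circle, and $x^\dagger$ is Kronecker. I expect the only real obstacle to be the clean extraction of the single factor $[a]_q$ from the $(1,1)$-entry of an arbitrarily long matrix product; the mechanism that resolves it is that $C_q$ carries interchange-symmetric trace and determinant, so that $\mathcal{S}_x$ depends on $(a,b)$ only through the prefactor $[a]_q$ and the symmetric data $(\tau_q,q^{a+b})$. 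The two genuine calculations are the trace identity and the base relation $[P_aW_bP_a]_{11}=[a]_q\tau_q$.
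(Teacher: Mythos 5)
Your proof is correct, but it runs through a different formalism than the paper's. The paper stays in the regular continued fraction picture and simply specialises its ready-made denominator recursion \eqref{SQrecursion} to $[0,a,b,a,\ldots,a]$, obtaining $S_{2m+2}=S_{2m}\bigl(1+q[a]_q[b]_q+q^{a+b}\bigr)-q^{a+b}S_{2m-2}$ with $S_2=[a]_q$ and $S_4=[a]_q\bigl(1+q[a]_q[b]_q+q^{a+b}\bigr)$; since both coefficients are \emph{manifestly} symmetric in $a$ and $b$ and the initial data carry the factor $[a]_q$, a one-line induction gives $S_{2m}=[a]_qf$ with $f$ symmetric, and the cyclotomic-divisibility argument (identical to yours) finishes. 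You instead re-derive this recursion from scratch: passing to the Hirzebruch string via \eqref{CF Conversion}, factoring the transfer matrix as $C_q^{\,k-1}P_a$, and applying Cayley--Hamilton, your Chebyshev-type $p_n$ satisfy $p_n=\tau_q p_{n-1}-q^{a+b}p_{n-2}$ --- and indeed your trace $\tau_q=[a]_q[b+2]_q-q(1+q^b)[a-1]_q$ equals the paper's coefficient $1+q[a]_q[b]_q+q^{a+b}$ as polynomials (both reduce to $\bigl(q^{a+b+2}-q^{a+b+1}+q^{a+b}-q^{a+1}-q^{b+1}+q^2-q+1\bigr)/(q-1)^2$), so the two recursions coincide exactly. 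The trade-off: the paper's route is shorter because the symmetry of the coefficients is visible at a glance, whereas in your trace form the $a\leftrightarrow b$ symmetry is hidden and costs you the explicit polynomial identity; in return your argument is self-contained (it does not presuppose \eqref{SQrecursion}), isolates the invariant-theoretic mechanism (the conjugation-type data $(\Tr C_q,\det C_q)$ is all the interchange sees), and yields the sharper closed form $f=p_{k-1}(\tau_q,q^{a+b})$, which makes the integrality of $f$ and the equality of the two cofactors automatic rather than inductive. All the computational steps you flag check out, including $[P_a]_{11}=[a]_q$, $[P_aW_b]_{12}=-q^{b+1}[a]_q$, and the base relation $[P_aW_bP_a]_{11}=[a]_q\tau_q$; the only cosmetic caveat is the degenerate block $a=1$ (where $P_1=I$), which your closed form handles under the convention $[-1]_q=-q^{-1}$ from \eqref{negation}, and the trivial case $k=1$, which is immediate.
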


\begin{proof}
 For the continued fraction $[0, a, b, a, b, \dots, a]$, according to equations \eqref{SQrecursion}, \eqref{SQinitial}, we have the following recursive formula
 \begin{equation*}
        S_{2m+2} = S_{2m} (1 + q [a]_q [b]_q + q^{a + b}) - q^{a + b} S_{2m-2}
    \end{equation*}
    with initial conditions
    \begin{equation*}
        S_2 = [a]_q, \qquad S_4 = [a]_q(1 + q[a]_q[b]_q + q^{a+b}).
    \end{equation*}
    In both $S_2$ and $S_4$ we have $[a]_q$ multiplied by some function of $q$ that is symmetric in $a$ and $b$. The coefficients in the recursive formula, namely $(1 + q[a]_q [b]_q + q^{a + b})$ and $(-q^{a+b})$, are also symmetric in $a$ and $b$, so by induction $S_{2m} = [a]_q f(q)$ for some function $f$, depending symmetrically on $a$ and $b$. Similarly for $x^\dagger=[0, b, a, b, a, \dots, b]$ we have $S_{2m}^\dagger = [b]_q f(q)$ with the same $f(q).$

 Since $[a]_q$ is a Kronecker polynomial, we see that $f(q)$ must be a Kronecker polynomial as well. This means that $S_{2m}^\dagger = [b]_q f(q)$ is also a Kronecker polynomial and thus $x^\dagger=[0, b, a, b, a, \dots, b]$ is  Kronecker.
\end{proof}

The following result provides a strong restriction on the fractions to be Kronecker.
We say that continued fraction of $x=\frac{r}{s}=[0,a_2,\ldots,a_{2n}]$ is palindromic if the tuple $(a_2,\ldots,a_{2n})$ is 
palindromic.

\begin{theorem} \label{thm:palindromic}
The continued fraction of every Kronecker fraction is palindromic.
\end{theorem}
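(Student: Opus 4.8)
The plan is to deduce palindromicity of the continued fraction from palindromicity of the quantum denominator $\mathcal{S}$, which is where the Kronecker hypothesis enters. First I would record that for a Kronecker fraction $\mathcal{S}$ is monic with integer coefficients and all roots on the unit circle, so by Kronecker's theorem it is a product of cyclotomic polynomials (no power of $q$ can occur, since $\mathcal{S}(0)=1$). The factor $\Phi_1(q)=q-1$ cannot appear, because $\mathcal{S}(1)=s\neq 0$; every remaining cyclotomic polynomial is palindromic, and a product of palindromic polynomials is again palindromic of the expected degree. Hence $\mathcal{S}=\mathcal{S}^{*}$, where $\mathcal{S}^{*}(q)=q^{\deg\mathcal{S}}\mathcal{S}(1/q)$. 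Using the conversion formula \eqref{CF Conversion}, which intertwines reversal of the regular tail $(a_2,\dots,a_{2m})$ with reversal of the Hirzebruch tail $(c_2,\dots,c_k)$, it suffices to prove that the Hirzebruch tail is palindromic; I will work in the Hirzebruch picture throughout, where all $c_i\geq 2$ for $i\geq 2$.

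The second step is a reversal formula: reversing the continued fraction reciprocates the quantum denominator. Writing $A_i=\begin{pmatrix}[c_i]_q & -q^{c_i-1}\\ 1 & 0\end{pmatrix}$ and $E=\begin{pmatrix}1&0\\0&-1\end{pmatrix}$, a direct check gives $A_i^{\mathsf T}=q^{c_i-1}\,E\,A_i\big|_{q\to 1/q}\,E$. Transposing the product \eqref{MqHir} and telescoping the inner factors $E^2=I$ yields $M_q(c_1,\dots,c_k)^{\mathsf T}=q^{\sum_i(c_i-1)}\,E\,M_q(c_k,\dots,c_1)\big|_{q\to 1/q}\,E$. Reading off the entry that carries the denominator, and using $\deg\mathcal{S}=\sum_{i=2}^{k}(c_i-1)$, this collapses to $\mathcal{S}'(q)=\mathcal{S}^{*}(q)$, where $\mathcal{S}'=K_{k-1}(c_k,\dots,c_2)_q$ is the quantum denominator of the mirror fraction $\tilde{x}$ obtained from the reversed tail. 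Combined with the first step, palindromicity of $\mathcal{S}$ gives $\mathcal{S}=\mathcal{S}^{*}=\mathcal{S}'$, so $x$ and $\tilde{x}$ share the same quantum denominator, i.e. $K_{k-1}(c_2,\dots,c_k)_q=K_{k-1}(c_k,\dots,c_2)_q$.

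The final step is to conclude that the tail equals its reversal. For this I would prove the key lemma that the map $(c_2,\dots,c_k)\mapsto K_{k-1}(c_2,\dots,c_k)_q$ is injective on tuples of a \emph{fixed} length with all entries $\geq 2$; since reversal preserves the length, injectivity forces $(c_2,\dots,c_k)=(c_k,\dots,c_2)$, i.e. the tail is palindromic. The natural route is induction on the length: by expanding along the last row, the highest-degree part of $K_{k-1}(c_2,\dots,c_k)_q$ coincides with $[c_k]_q\,K_{k-2}(c_2,\dots,c_{k-1})_q$, the correction $-q^{c_{k-1}-1}K_{k-3}(c_2,\dots,c_{k-2})_q$ having strictly smaller degree, so one can hope to recover $c_k$, then $K_{k-2}(c_2,\dots,c_{k-1})_q$, and peel off the last partial quotient. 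I expect this recovery to be the main obstacle: deconvolving $[c_k]_q\,K_{k-2}(\dots)_q$ to isolate $c_k$ requires controlling the extreme coefficients of $K_{k-2}$ along the induction, and the restriction to a fixed length is genuinely necessary, since the coincidence $K_2(3,3)_q=K_3(2,3,2)_q$ shows that the quantum continuant is not injective once tuples of different lengths are allowed. A cleaner alternative, if it can be made to work, would be to match the quantum numerators as well and invoke injectivity of the Morier-Genoud–Ovsienko map $x\mapsto[x]_q$ to conclude $\tilde{x}=x$ directly; but extracting the numerator identity $\mathcal{R}=\mathcal{R}'$ from $\mathcal{S}=\mathcal{S}^{*}$ is not immediate, so I would fall back on the fixed-length continuant injectivity lemma.
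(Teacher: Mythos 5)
Your first two steps are correct, but they do not advance the proof beyond its starting point, and the third step --- where the entire difficulty of the theorem lives --- is left unproven. Step 1 (palindromicity of $\mathcal{S}$ via Kronecker's theorem) is exactly how the paper begins. Your transpose identity $A_i^{\mathsf T}=q^{c_i-1}E\,A_i\big|_{q\to 1/q}\,E$ is valid, and telescoping it through \eqref{MqHir} correctly re-derives the Morier-Genoud--Ovsienko mirror formula \eqref{mirror}, which the paper simply quotes from \cite{MGO1}. But observe what your step 2 actually says: by that same mirror formula, $K_{k-1}(c_k,\ldots,c_2)_q=\mathcal{S}^{*}(q)$ holds \emph{unconditionally}, so the conclusion $\mathcal{S}=\mathcal{S}'$ is logically equivalent to ``$\mathcal{S}$ is palindromic,'' which you already had from step 1. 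After step 2 you are therefore standing exactly in front of the hard direction of the paper's key lemma --- a palindromic quantum continuant forces a palindromic tuple --- with nothing yet in hand to prove it.

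That remaining implication is precisely what your proposal does not deliver. The fixed-length injectivity lemma is only conjectured, and your sketched induction stalls for a concrete reason: in the expansion $K_{k-1}(c_2,\ldots,c_k)_q=[c_k]_q K_{k-2}(c_2,\ldots,c_{k-1})_q-q^{c_{k-1}-1}K_{k-3}(c_2,\ldots,c_{k-2})_q$ the correction term has degree $d-c_k+1$ (with $d=\deg\mathcal{S}$), so only the top $c_k-2$ coefficients of the product are uncontaminated --- and you cannot locate where this clean region ends without already knowing $c_k$, making the proposed recovery circular; you concede the obstacle and then ``fall back'' on the very lemma you could not prove. (Your cautionary example $K_2(3,3)_q=K_3(2,3,2)_q$ is correct and shows the fixed-length restriction is genuinely needed; even so, fixed-length injectivity is strictly stronger than what the theorem requires, and I see no easy proof of it.) The paper avoids injectivity altogether: it uses the Leclere--Morier-Genoud result \cite{LMG} that $\Tr M_q(c_1,\ldots,c_k)$ is palindromic for \emph{every} tuple, and plays this off against the trace identity \eqref{Trace Equation}. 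If $K_k$ is palindromic, comparing \eqref{Trace Equation} with its image under $q\mapsto q^{-1}$ forces $K_{k-2}(c_2,\ldots,c_{k-1})_q$ to be palindromic of degree $d-2c_k+2$, whereas its actual degree is $d-c_1-c_k+2$; equating the two gives $c_1=c_k$, and induction on the inner tuple peels off the remaining equalities. To repair your argument, either prove the injectivity lemma (a substantial missing step) or replace step 3 by this trace-plus-degree-counting mechanism.
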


\begin{proof}
First observe that if $x=r/s$ is Kronecker, then the denominator $\mathcal{S}(q)$ of $[r/s]_q$ is 
palindromic.
Indeed, from Kronecker's theorem we know that $\mathcal{S}(q)$ is a product of cyclotomic polynomials. Since $\mathcal{S}(1) = s$, the cyclotomic $\Phi_1(q)=q-1$ is not a factor, and hence $\mathcal{S}$ is a product of palindromic polynomials,
which is palindromic.

 \begin{lemma}
The quantum continuant $K_k(c_1,\ldots,c_k)_q$ is a palindromic polynomial 
if and only if $c_{i} = c_{k+1-i}$, $i=1,2,\ldots$ (i.e. the list of
coefficients $(c_1,\ldots,c_k)$ is palindromic).
\end{lemma}  
    
    
   Morier-Genoud and Ovsienko \cite{MGO1} proved the following ``mirror formula" for quantum continuants:
    \begin{equation}
    \label{mirror}
        K_k(c_1, c_2, \dots, c_k)_q = q^{c_1 + c_2 + \dots + c_k - k} K_k(c_k, \dots, c_2, c_1)_{q^{-1}}.
    \end{equation}
    Assuming palindromicity of $(c_1,\ldots,c_k)$, this becomes
    \begin{displaymath}
        K_k(c_1, c_2, \dots, c_k)_q = q^{c_1 + c_2 + \dots + c_k - k} K_k(c_1, c_2, \dots, c_k)_{q^{-1}} 
    \end{displaymath}
Since $\deg(K_k(c_1,\ldots,c_k)_q) = c_1 + c_2 + \dots + c_k - k$ (see \cite{MGO1}), this implies palindromicity of $K_k(c_1,\ldots,c_k)_q$.

    Now suppose we have that $K_k(c_1, \dots, c_k)_q$ palindromic.
   From the results of Leclere and Morier-Genoud \cite{LMG}, we know that for arbitrary $(c_1, c_2, \dots, c_k)$ the trace of the matrix $M_q(c_1, c_2, \dots, c_k)$ is palindromic. Referring to \eqref{eq:RS_continuants} and \eqref{MqHir},
 this trace can be calculated as
    \begin{equation}
    \label{Trace Equation}
        \Tr M_q(c_1, \dots, c_k) = K_k(c_1, \dots, c_k)_q - q^{c_k - 1} K_{k-2}(c_2, \dots, c_{k-1})_q
    \end{equation}
    Now, we know that the degree $K_k$ is greater than that of $K_{k-2}$ (they have degree $d := c_1 + \dots + c_k - k$ and $c_2 + \dots + c_{k-1} - k + 2$ respectively). 

    This means that $\Tr M_q$ and $K_k$ are palindromic of the same degree $d$.
$$
        \Tr M_q (q) = q^d \Tr M_q(q^{-1}), \quad
        K_k (q) = q^d K_k(q^{-1}).
$$

    Replacing $q$ by $q^{-1}$ in the trace equation~\eqref{Trace Equation}, we therefore have
$$
        \Tr M_q(q^{-1}) = K_k(q^{-1})  - q^{1 - c_k} K_{k-2}(q^{-1}) ,
        $$
        $$
        q^{1-c_k} K_{k-2}(q^{-1})  = K_k(q^{-1})  - \Tr M_q(q^{-1})  \\
        = q^{-n} \left(q^{c_k - 1} K_{k-2}(q)\right).
        $$
Thus   $q^{n - 2c_k + 2} K_{k-2}(q^{-1})  = K_{k-2}(q),
$
so $K_{k-2}$ is also palindromic, but of degree $n - 2c_k + 2$. However, from the theory of $q$-continuants we know that $K_{k-2}$ has degree $n - c_1 - c_k + 2$. Equating these two expressions for the degree we conclude that $c_1 = c_k$.

    Now that we know that $K_{k-2}(c_2, \dots, c_{k-1})$ is palindromic we can repeat this process using Eq.~\eqref{Trace Equation} again on this shortened set of $c_i$'s. At each step we will obtain one more $c_i = c_{k-i+1}$ until we have palindromicity of the tuple $(c_1, c_2, \dots, c_k)$.
%
This proves the lemma.

We now conclude the proof of the theorem.  By \eqref{eq:RS_continuants},
$\mathcal{S}(q) = K_{n-1}(c_2,\ldots,c_n)_q$, so if $\mathcal{S}(q)$
is a palindromic polynomial, the lemma proves that the list
$(c_2,\ldots,c_n)$ is palindromic.  We note that the Hirzebruch
continued fraction expansion is palindromic if and only if the regular
continued fraction expansion is palindromic, as is clear from the
identification \eqref{CF Conversion}.  Therefore the regular continued
fraction expansion is palindromic.
\end{proof}

Note that the converse to the theorem is not true. Indeed, $x=\frac8{21}=[0,2,1,1,1,2]$ has palindromic continued fraction, but the denominator of $[x]_q$
$$
  \mathcal{S}(q) = q^6 + 3q^5 + 4q^4 + 5q^5 + 4q^2 + 3q +1 \\
=(q^4+2q^3+q^2+2q+1)(q^2+q+1).
$$
has the first factor, which is irreducible but not cyclotomic, so $x$ is not Kronecker.

\begin{remark} It is important here that we consider the continued fraction expansions of even length. For example, the continued fraction of $\frac{2}{5}=[0,2,2]=[0,2,1,1]$ is not palindromic, in agreement with
$$
[2/5]_q=\frac{q^3+q^2}{q^3+2q^2+q+1}.
$$
\end{remark}

There is the following classical result about palindromic continued fractions due to Serret \cite{Ser}.

\begin{theorem}(Serret (1848))
\label{Serret}
    A rational number $r/s$ with $r < s$ has a continued fraction of the form $[0, a_2, \ldots ,a_{2m}]$ with $(a_2, \dots, a_{2m}) $ palindromic if and only if $s$ divides $r^2 - 1$.
\end{theorem}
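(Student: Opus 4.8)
The plan is to pass to the transfer-matrix formulation of continued fractions. Since $r<s$ and $\gcd(r,s)=1$, the even-length expansion $\frac{r}{s}=[0,a_2,\ldots,a_{2m}]$ is equivalent to $\frac{s}{r}=[a_2,\ldots,a_{2m}]$, a continued fraction with the odd number $2m-1$ of partial quotients. First I would encode this as the product of transfer matrices
$$M:=\begin{pmatrix} a_2 & 1\\ 1 & 0\end{pmatrix}\begin{pmatrix} a_3 & 1\\ 1 & 0\end{pmatrix}\cdots\begin{pmatrix} a_{2m} & 1\\ 1 & 0\end{pmatrix}=\begin{pmatrix} s & u\\ r & v\end{pmatrix},$$
whose entries are the classical continuants $s=K(a_2,\ldots,a_{2m})$, $r=K(a_3,\ldots,a_{2m})$, $u=K(a_2,\ldots,a_{2m-1})$, $v=K(a_3,\ldots,a_{2m-1})$ (these coincide with the $K^+$ of the excerpt, since $a_1=0$ gives $r=K^+_{2m}(0,a_2,\ldots,a_{2m})=K(a_3,\ldots,a_{2m})$). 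Taking determinants of the $2m-1$ factors yields the single identity I will rely on throughout, namely $sv-ur=(-1)^{2m-1}=-1$.

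The heart of the argument is the equivalence that $(a_2,\ldots,a_{2m})$ is palindromic if and only if $u=r$. For this I would use that each factor $\left(\begin{smallmatrix} a & 1\\ 1 & 0\end{smallmatrix}\right)$ is symmetric, so transposing $M$ reverses the order of the factors; hence $M^{\mathsf T}$ is exactly the transfer matrix of the reversed sequence $[a_{2m},\ldots,a_2]$. If the sequence is palindromic then reversing it changes nothing, so $M=M^{\mathsf T}$ and in particular $u=r$; conversely, $u=r$ forces $M=M^{\mathsf T}$, and since a product of such transfer matrices determines its tuple of positive partial quotients uniquely once the number of factors is fixed, the reversed tuple must equal the original, i.e.\ it is palindromic. (Equivalently, one reads $u=r$ off directly from the continuant reversal symmetry $K(a_2,\ldots,a_{2m-1})=K(a_{2m-1},\ldots,a_2)$ together with the palindrome relations $a_{2m-1}=a_3,\dots,a_2=a_{2m}$.)

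Finally I would convert $u=r$ into the divisibility statement using the determinant identity. From $sv-ur=-1$ we get $ur\equiv 1\pmod s$, and since $\gcd(r,s)=1$ this says $u\equiv r^{-1}\pmod s$. If the fraction is palindromic then $u=r$, whence $r^2\equiv 1\pmod s$, i.e.\ $s\mid r^2-1$. For the converse, suppose $s\mid r^2-1$, so that $r^2\equiv 1\equiv ur\pmod s$; cancelling the unit $r$ gives $u\equiv r\pmod s$. Here the only point requiring care, and the step I expect to be the main obstacle, is upgrading this congruence to the exact equality $u=r$: I would show from the continuant recursion and positivity of the $a_i$ that $0\le r<s$ and $0\le u<s$, so that $\lvert u-r\rvert<s$ forces $u=r$, and then palindromicity follows from the equivalence of the previous paragraph. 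Assembling the three steps gives palindromic $\iff u=r\iff s\mid r^2-1$.
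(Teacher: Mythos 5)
Your proof is correct. Note first that the paper itself does not prove this statement: it quotes Serret's theorem as a classical result with a citation to the 1848 original, so there is no in-paper argument to compare against. Your argument is the standard classical one, and it is in fact the $q=1$ specialization of machinery the paper does use elsewhere: your matrix $M$ is the classical counterpart of the continuant matrices in \eqref{MqHir}, your identity $u=K(a_2,\ldots,a_{2m-1})$ with $s/u=[a_{2m},\ldots,a_2]$ is the $q=1$ case of the mirror formula \eqref{mirror}, and the chain ``palindromic $\iff u=r \iff r^2\equiv 1 \pmod{s}$'' is exactly the right skeleton. The two delicate points are both handled adequately. For the upgrade from $u\equiv r\pmod s$ to $u=r$: the bound $u<s$ follows from the continuant recursion $s=a_{2m}\,K(a_2,\ldots,a_{2m-1})+K(a_2,\ldots,a_{2m-2})\geq u+1$ (with the length-one case $s/r=[a_2]$, $u=r=1$, checked separately), and together with $0<r<s$ this forces $u=r$ as you say. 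For the converse step ``$M=M^{\mathsf T}$ implies palindromic'': the uniqueness you invoke is the standard fact that a rational has at most one expansion of each fixed length with positive integer partial quotients (the only ambiguity, $[\ldots,a]=[\ldots,a-1,1]$, changes the length by one), so $[a_2,\ldots,a_{2m}]=s/r=s/u=[a_{2m},\ldots,a_2]$ with both tuples of length $2m-1$ indeed forces the tuple to be palindromic. One presentational remark: the theorem as stated is existential (``has a continued fraction of the form\ldots''), while you argue about a given odd-length tuple $(a_2,\ldots,a_{2m})$; since every rational $s/r>1$ has exactly one expansion with an odd number of partial quotients, the two readings coincide, and it would be worth one sentence making that identification explicit.
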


In combination with Theorem \ref{thm:palindromic} this implies:

 \begin{corollary} \label{cor:palindromic}
For any Kronecker fraction $r/s \in (0, 1)$ the denominator $s$ divides $r^2-1.$
\end{corollary}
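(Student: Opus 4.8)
The plan is to deduce the statement directly by chaining the two immediately preceding results, since they are phrased in precisely compatible language. Theorem~\ref{thm:palindromic} delivers palindromicity of the \emph{even-length} regular continued fraction of a Kronecker fraction, and Serret's theorem (Theorem~\ref{Serret}) takes exactly such a palindromic even-length expansion as the hypothesis of its characterization. So the corollary should fall out by a one-line combination, with the real content already residing in the two inputs.

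First I would fix a Kronecker fraction $r/s \in (0,1)$, taken in lowest terms. Since $r < s$, its regular continued fraction has $a_1 = 0$, and using the identity $1/a_N = 1/((a_N-1)+1/1)$ recorded in Section~2 I may assume it has even length, writing $r/s = [0, a_2, \ldots, a_{2m}]$. Next I would invoke Theorem~\ref{thm:palindromic}: because $r/s$ is Kronecker, the tuple $(a_2, \ldots, a_{2m})$ is palindromic. This is precisely the object appearing in Serret's criterion, so applying Theorem~\ref{Serret} to this palindromic even-length expansion yields $s \mid r^2 - 1$, which is the assertion.

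I do not expect a genuine obstacle here, since essentially all of the difficulty has already been absorbed into the proof of Theorem~\ref{thm:palindromic}. The single point demanding care is the parity of the length of the expansion. Both our notion of a palindromic continued fraction and Serret's criterion refer to the even-length tuple $(a_2,\ldots,a_{2m})$, and one must not silently substitute the odd-length representation, whose palindromicity need not track the divisibility $s \mid r^2 - 1$. This is exactly the content of the Remark following Theorem~\ref{thm:palindromic}: the odd-length expansion $2/5 = [0,2,2]$ has the palindromic tuple $(2,2)$, yet $5 \nmid 2^2 - 1$, so $2/5$ is not Kronecker. Once the even-length convention is enforced throughout, the hypotheses of the two theorems align and the corollary is immediate.
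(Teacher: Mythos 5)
Your proposal is correct and matches the paper's own derivation exactly: the paper obtains the corollary precisely by combining Theorem~\ref{thm:palindromic} with Serret's Theorem~\ref{Serret}, with no additional content. Your explicit attention to the even-length convention (illustrated by the $2/5$ example) is a sound and welcome precaution, consistent with the paper's remark on that point.
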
  

Corollary \ref{cor:palindromic} appears as Theorem 3.6 in the recent
work \cite{KMRWY}, where it is proved by a different method.


These results simplify the search for Kronecker fractions. In
particular, there are 9 fractions in $(0,1)$ with palindromic
continued fraction and numerator $17$:
$$
  \frac{17}{18},\quad
  \frac{17}{24},\quad
  \frac{17}{32}, \quad
  \frac{17}{36}, \quad
  \frac{17}{48}, \quad
  \frac{17}{72}, \quad
  \frac{17}{96}, \quad
  \frac{17}{144}, \quad
  \frac{17}{288}. \quad
$$
We have checked that all fractions are Kronecker, except $\frac{17}{72}=[0,4,4,4]$.

 One can also use this to identify potential infinite families of Kronecker functions. In fact, if we consider $r$ and $s$ as polynomials in the family parameter $n$, the Serret condition looks very strong. For example, if $s=n(n+1)(n+2)$ then for $r$ we have the following possibilities:
$$n^2+3n+1, \, n^2+n-1, \, 2n^2+4n+1, $$
leading to 3 families of Kronecker fractions $K6, K7, K8$ (see the Appendix). This should provide strong restrictions on suitable denominators $s(n)$, which are very special in all known examples.

Another further corollary allows us to find all Kronecker fractions with
prime denominator:

\begin{corollary}
    A rational number $r/s \in (0, 1)$ with $s$ prime is Kronecker if and only if $r = 1$ or $r = s-1$.    
\end{corollary}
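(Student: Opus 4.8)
The plan is to prove the two implications separately, with the forward (``if'') direction resting on the examples already recorded together with the Proposition on reflection, and the reverse (``only if'') direction reducing to a one-line divisibility argument via Corollary \ref{cor:palindromic}.

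For the ``if'' direction I would first recall that $\frac1s$ is Kronecker for \emph{every} $s$, not just prime $s$: indeed $\left[\frac1s\right]_q = \frac{q^{s-1}}{[s]_q}$, and the denominator $[s]_q = 1 + q + \cdots + q^{s-1} = (q^s-1)/(q-1)$ factors as the product of the cyclotomic polynomials $\Phi_d$ over the divisors $d>1$ of $s$, so all of its roots lie on the unit circle. Applying the Proposition on $x\mapsto 1-x$ to $x=\frac1s$ then shows that $\frac{s-1}{s} = 1-\frac1s$ is Kronecker as well. This settles $r=1$ and $r=s-1$ without using primality at all.

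For the ``only if'' direction I would invoke Corollary \ref{cor:palindromic}: any Kronecker fraction $r/s\in(0,1)$ satisfies $s \mid r^2-1 = (r-1)(r+1)$. Since $s$ is prime, Euclid's lemma forces $s \mid r-1$ or $s \mid r+1$. Because $r/s\in(0,1)$ we have $1 \le r \le s-1$ (and $\gcd(r,s)=1$ automatically, as $s$ is prime), whence $0 \le r-1 \le s-2$ and $2 \le r+1 \le s$. The only multiple of $s$ in the first range is $0$, giving $r=1$, and the only multiple of $s$ in the second range is $s$, giving $r=s-1$. This exhausts the cases and completes the converse.

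The genuine content lives entirely in Corollary \ref{cor:palindromic} (equivalently in Theorem \ref{thm:palindromic} combined with Serret's theorem), so once that is available the present corollary is essentially immediate and I anticipate no real obstacle beyond keeping track of the ranges of $r\pm 1$. The only point deserving a moment's care is confirming that $\frac1s$ and $\frac{s-1}{s}$ are Kronecker for all $s$, so that the ``if'' direction is unconditional; this is exactly why primality is needed solely in the converse, where it upgrades the divisibility $s \mid (r-1)(r+1)$ to a statement about a single factor.
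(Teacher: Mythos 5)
Your proposal is correct and follows essentially the same route as the paper: the ``only if'' direction is the identical divisibility argument from Corollary \ref{cor:palindromic} (that $s \mid r^2-1$ forces $r \equiv \pm 1 \pmod s$ for prime $s$), and the ``if'' direction rests on the same facts the paper cites, namely that the $q$-denominator is $[s]_q$ (case $K1$). Your only cosmetic difference is handling $r = s-1$ via the reflection proposition $x \mapsto 1-x$ rather than asserting directly that its $q$-denominator is also $[s]_q$; this is a slightly more explicit justification of the same point.
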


\begin{proof}
Indeed, $r^2 - 1 \equiv 0 \pmod s$ with prime $s$ holds only for $r \equiv \pm 1 \pmod s$, so $r=1$ or $s-1.$
In both cases the $q$-denominator is $[s]_q$, so they are Kronecker (see case $K1$ in the appendix table).
\end{proof}

We used a computer with the SageMath software \cite{Sage} to find all potential Kronecker functions with the denominators less than 5000. This allowed us to 
identify 13 infinite families of Kronecker fractions (each one indexed by
a parameter $n\in\N$) and a further 23 examples which do not fit to any of
our families.  These findings are summarised in the Appendix.

For the infinite families we can prove that all members are Kronecker
fractions.
For example, for the fraction $\displaystyle \frac{2n+1}{4n+4} = [0,2,n,2]$,
$n\in\N$, we have
\begin{displaymath}
  \left[ \frac{2n+1}{4n+4} \right]_q = \frac{q^2 [n]_q [2]_q + q^{n+3}}%
{[2]_q\left( q [2]_q [n]_q + q^{n+2} + 1\right)} 
= \frac{\mathcal{R}(q)}{\mathcal{S}(q)}.
\end{displaymath}
By elementary algebra
$
\mathcal{S}(q)=   q [2]_q [n]_q + q^{n+2} + 1 = [n+1]_q (1+q^2),
$
so these fractions are Kronecker (family $K2$ below). By the interchange formula (Proposition \ref{prop:interchange}) the same is true for
\begin{displaymath}
  \frac{2n+1}{2n(n+1)} = [0,n,2,n]
\end{displaymath}
 with denominator
$
 \mathcal{S}(q)= [n]_q [n+1]_q (1+q^2)
$
(family $K4$).

\section{Concluding remarks}

A natural extension would be to look also at the set of quantum
irrational numbers having radius of convergence $1.$

Morier-Genoud and Ovsienko \cite{MGO2}  suggested that the radius of convergence of
$[\pi]_q$ is $1$ due to the (experimentally observed) slow growth
of coefficients. This could mean that this property is typical for irrational numbers.
We can claim, however, that this never happens for quadratic irrationals.

\begin{proposition}
All quantum quadratic irrationals have radius of convergence strictly less than 1.
\end{proposition}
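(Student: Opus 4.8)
The plan is to realise $[x]_q$ as an algebraic function of degree $2$ over $\C(q)$ and to locate its nearest singularity to $q=0$. Since $x$ is a quadratic irrational its continued fraction is eventually periodic, and after finitely many applications of the shift and inversion formulas \eqref{shift}, \eqref{inversion} it suffices to treat a purely periodic tail $y=\llbracket\overline{c_1,\dots,c_\ell}\rrbracket$. Indeed $[x]_q$ is a $\C(q)$-Möbius transform of $[y]_q$, so the two functions have the same square-root branch points, while the finitely many extra poles introduced by the pre-period can only decrease the radius. For the periodic tail, $[y]_q$ is a fixed point of the Möbius map attached to $M_q:=M_q(c_1,\dots,c_\ell)$, hence a root of a quadratic over $\Z[q]$ whose discriminant is
\begin{equation*}
  \Delta(q)=\bigl(\Tr M_q\bigr)^2-4\det M_q .
\end{equation*}
Each factor $\left(\begin{smallmatrix}[c_i]_q&-q^{c_i-1}\\1&0\end{smallmatrix}\right)$ has determinant $q^{c_i-1}$, so $\det M_q=q^{D}$ with $D=c_1+\dots+c_\ell-\ell$, and by the Leclere--Morier-Genoud result $\Tr M_q$ is monic and palindromic of degree $D$. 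Hence $\Delta$ is a monic palindromic polynomial with integer coefficients, of degree $2D$, with $\Delta(0)=1$.

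Next I argue by contradiction. If $R(x)\ge 1$ then $[y]_q$ is analytic on $|q|<1$, so it has no singularity there; in particular $\Delta$ has no zero in the open unit disc. Being palindromic, the zeros of $\Delta$ occur in reciprocal pairs $\rho,1/\rho$, so the absence of interior zeros forces \emph{all} zeros of $\Delta$ onto the unit circle. As $\Delta$ is monic with integer coefficients, Kronecker's Theorem then shows that $\Delta$ is a product of cyclotomic polynomials. Moreover $\Delta$ cannot be a perfect square (otherwise $\sqrt{\Delta}$ would be a polynomial and $[y]_q$ rational, forcing $y\in\Q$), so its branch points are genuine. In this way the whole statement reduces to the single claim that $\Delta(q)=(\Tr M_q)^2-4q^{D}$ is \emph{never} a product of cyclotomic polynomials, equivalently that $\Delta$ has a zero off the unit circle.

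To refute cyclotomicity I exploit that $M:=M_q\big|_{q=1}\in SL_2(\Z)$ is hyperbolic: $y$ and its Galois conjugate are its two distinct real fixed points, so $t:=\Tr M$ is an integer with $|t|\ge 3$ and $\Delta(1)=t^2-4\ge 5$. Writing $q=\rme^{\rmi\theta}$ and using palindromicity, $\Tr M_q=\rme^{\rmi D\theta/2}g(\theta)$ with $g$ real-valued, whence $\Delta(\rme^{\rmi\theta})=\rme^{\rmi D\theta}\bigl(g(\theta)^2-4\bigr)$ and the Mahler measure satisfies
\begin{equation*}
  \log\mu(\Delta)=\frac1{2\pi}\int_0^{2\pi}\log\bigl|g(\theta)^2-4\bigr|\,\rmd\theta .
\end{equation*}
A product of cyclotomics has $\mu=1$, so it suffices to prove $\mu(\Delta)>1$. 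A transparent sufficient sub-case is already visible: when $D$ is even and $\bigl|\Tr M_q\big|_{q=-1}\bigr|<2$ one gets $\Delta(-1)<0<\Delta(0)=1$, producing a real zero of $\Delta$ in $(-1,0)$, strictly inside the disc, hence a genuine branch point and $R(x)<1$.

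The main obstacle is to make this work for all parities of $D$ and all values at $q=-1$, i.e.\ to show that $g(\theta)^2-4$ cannot have all $2D$ of its zeros real while $\Tr M_q$ stays monic with unit constant term. The intended mechanism is rigidity: for $D$ even one has the factorisation $\Delta=(\Tr M_q-2q^{D/2})(\Tr M_q+2q^{D/2})$ into two monic integer polynomials of degree $D$ with constant term $1$, and if both were cyclotomic then $g(\theta)=\pm2$ would have to hold at the maximal possible number of points, forcing an extremal oscillation whose top Fourier coefficient (equal to $2$, since $\Tr M_q$ is monic and palindromic) would have to exceed $2$ — a contradiction. Turning this rigidity, together with the integrality and the hyperbolicity $|t|\ge3$, into a clean proof that the displayed integral is strictly positive is the delicate step; I expect it, rather than the reduction, to carry the real weight of the argument.
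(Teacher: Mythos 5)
Your proposal is not a proof but a reduction plus a conjecture, and you say so yourself: the entire weight rests on showing that $\Delta(q)=(\Tr M_q)^2-4q^{D}$ is never a product of cyclotomic polynomials, and that step is never carried out. The Mahler-measure inequality $\mu(\Delta)>1$ is stated as a goal, not proved; the ``rigidity'' sketch for even $D$ (an extremal-oscillation count on the top Fourier coefficient) is a heuristic, not an argument; and the odd-$D$ case is not addressed at all. There is also a genuine error earlier in the chain: from analyticity of $[y]_q$ on $|q|<1$ you may conclude only that $\Delta$ has no zero of \emph{odd} multiplicity in the open disc. An even-order zero creates no branch point ($\sqrt{\Delta}$ remains single-valued there), so a factorisation $\Delta=E(q)^2F(q)$ with the zeros of $E$ split into reciprocal pairs inside and outside the circle is perfectly compatible with analyticity. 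Hence ``no interior zeros, so all zeros lie on the unit circle, so $\Delta$ is cyclotomic by Kronecker'' does not follow; at best the \emph{squarefree part} of $\Delta$ is forced to be cyclotomic, and then $\mu(\Delta)>1$ is no contradiction, so even a successful Mahler-measure estimate would not close the argument as structured.

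The paper disposes of the whole problem in two lines by quoting Fatou's theorem (1906): a power series with integer coefficients and radius of convergence equal to $1$ is either rational or transcendental. Eventual periodicity of the continued fraction of a quadratic irrational $x$ makes $[x]_q$ satisfy an algebraic equation over $\C(q)$ (your own fixed-point observation), it is not a rational function, and the earlier proposition of Section~\ref{sec:drei} already gives $R(x)\le 1$; hence $R(x)<1$. This is instructive for your plan: the configuration your approach must exclude by hand --- all genuine branch points of the algebraic function $[y]_q$ sitting exactly on the unit circle --- is precisely what Fatou's theorem forbids, so completing your route would amount to reproving the relevant case of Fatou via explicit cyclotomicity estimates. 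That may be possible, but it is a substantial open step, not a delicate finishing touch; as written, the proposal has a genuine gap at its core.
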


\begin{proof}
To prove this, we use a classical theorem of Fatou \cite{Fatou}:

\begin{theorem}
(Fatou (1906)) If $f(z)$ is a function whose power series has 
integer coefficients and radius of convergence equal to $1$, then
$f$ is either rational or transcendental.
\end{theorem}

Since quadratic irrationals have continued fractions that are
(eventually) periodic, their quantisation satisfies an algebraic
equation, so thus cannot have radius of
convergence $1$ by Fatou's Theorem.
\end{proof}


There is a class of {\it rational (or two-bridge) knots and links} labelled by the continued fractions (see \cite{Lickorish}).
Due to \cite{LS, MGO1} the (normalised) Jones polynomials of rational knot $K(r/s)$ can be computed in terms of the corresponding quantum rational $[r/s]_q=\frac{\mathcal R(q)}{\mathcal S(q)}$ as
$$
J_{\frac{r}{s}}(q)=q\mathcal R(q)+(1-q)\mathcal S(q).
$$

It is natural to ask what is special about the knots/links $K(r/s)$, corresponding to the Kronecker fractions $r/s$ (apart from palindromic symmetry).

Some examples of knots and links corresponding to the fractions from the Kronecker families $K1-K5$ are shown on Fig. 2 (taken from Rolfsen's book \cite{Rolfsen}).

\begin{figure}[h]
   \begin{center}
 \includegraphics[width=1.7cm]{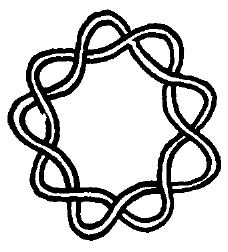} \quad  \includegraphics[width=1.8cm]{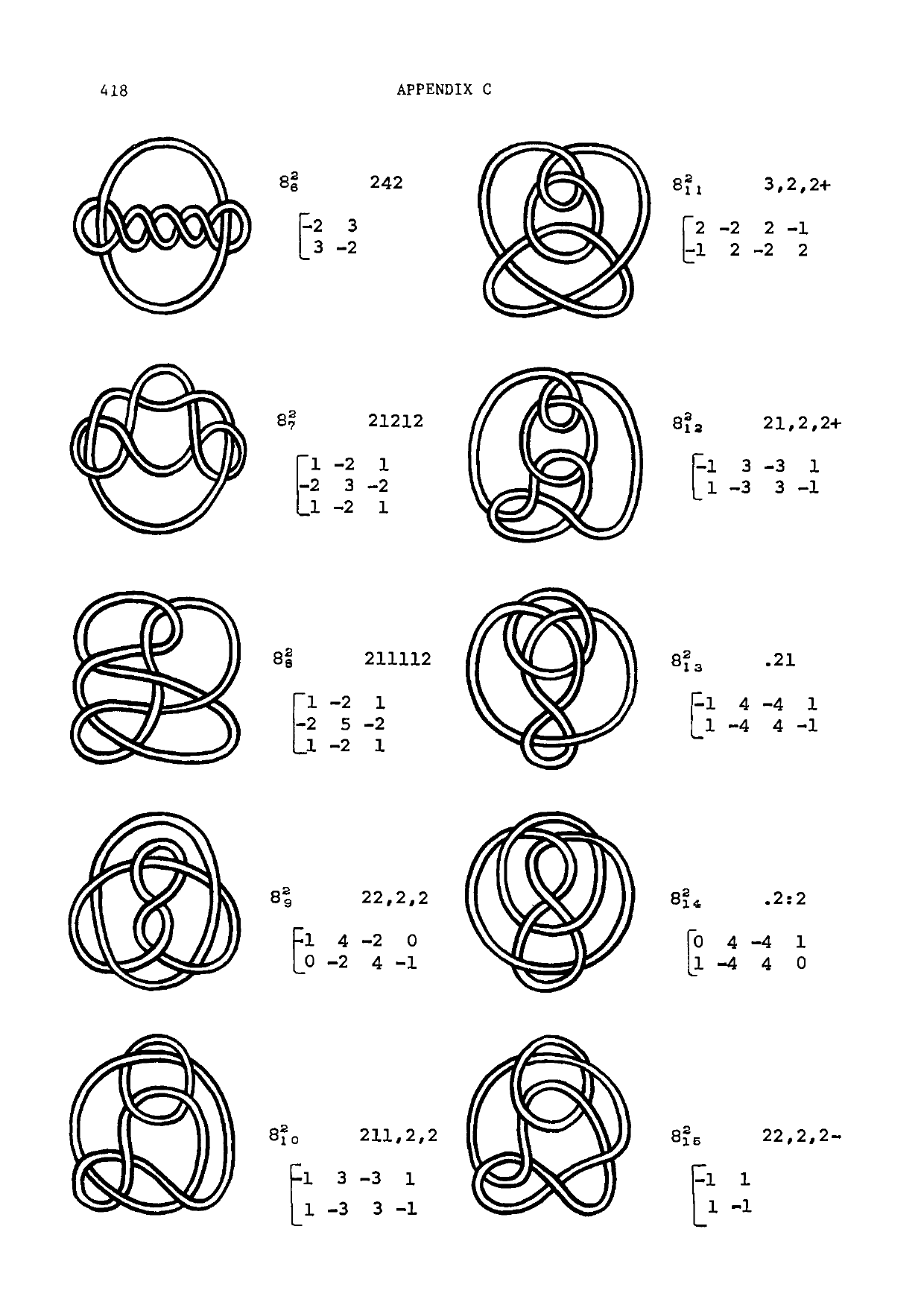}  \quad \includegraphics[width=2cm]{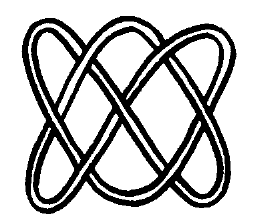} \quad \includegraphics[width=1.8cm]{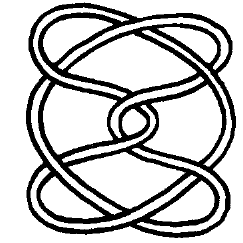} \quad \includegraphics[width=2cm]{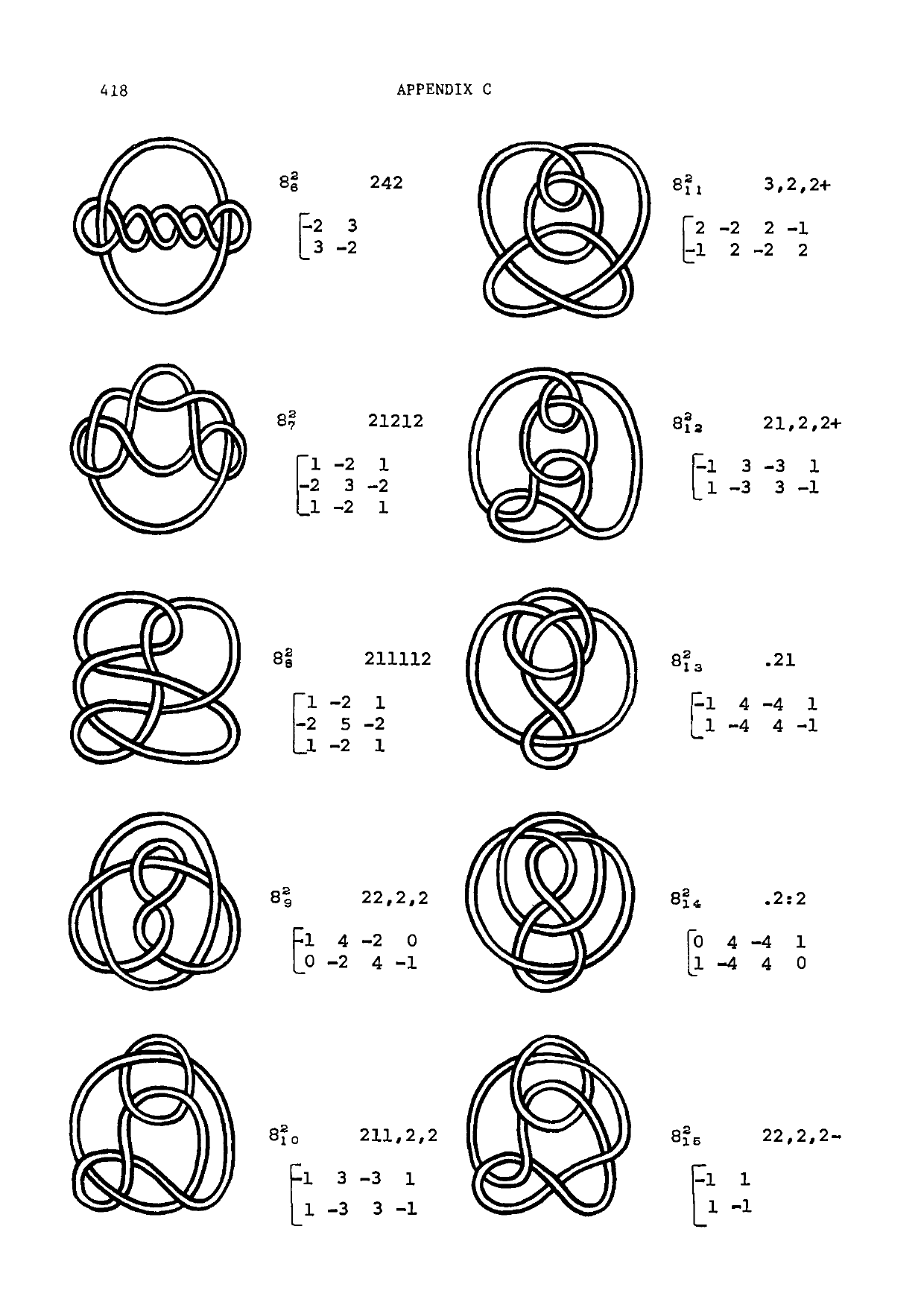}
\caption{Knots and links corresponding to the Kronecker fractions $[0,9],\, [0,2,4,2], \,[0,3,1,3], \,[0,3,2,3],\, [0,2,1,2,1,2].$}
 \end{center}
 \end{figure}

Note also that the quantum rationals play an important role for the Burau specialization problem \cite{BB}: 
{\it At which specializations of $t\in\mathbb C^*$ is the Burau representation $\rho_3$ faithful?}

Recall that the (reduced) Burau representation of the Artin's braid group $\rho_n: B_n \to GL(n-1, \mathbb Z[t,t^{-1}])$ in the case $n=3$
 is defined by
\begin{equation}
\label{rho3}
\begin{array}{rclrcl}
\rho_3\; : \quad \sigma_1 &\mapsto& 
\begin{pmatrix}
-t&1\\[4pt]
0&1
\end{pmatrix}, & \sigma_2 &\mapsto& \begin{pmatrix}
1&0\\[4pt]
t&-t
\end{pmatrix}.
\end{array}
\end{equation}

Define $\Sigma\subset \mathbb C^*$ as the union of complex poles of all $q$-rationals and $\Sigma_*:=\Sigma \cup \{1\}$. 

\begin{theorem}
(\cite{MGOV_2024}) The Burau representation $\rho_3$ specialized at $t_0\in  \mathbb C^*$ is faithful if and only if $-t_0 \notin \Sigma_*.$
\end{theorem}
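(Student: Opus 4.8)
The plan is to set $q=-t$ and to use the dictionary, already visible in \eqref{rho3} and \eqref{MqHir}, between braid words and the quantum continuant matrices. In the variable $q$ the generators of \eqref{rho3} read $\rho_3(\sigma_1)=\left(\begin{smallmatrix} q & 1\\ 0 & 1\end{smallmatrix}\right)$ and $\rho_3(\sigma_2)=\left(\begin{smallmatrix} 1 & 0\\ -q & q\end{smallmatrix}\right)$, so that $\rho_3(\sigma_1^{a})=\left(\begin{smallmatrix} q^{a} & [a]_q\\ 0 & 1\end{smallmatrix}\right)$ and $\rho_3(\sigma_2^{b})=\left(\begin{smallmatrix} 1 & 0\\ -q[b]_q & q^{b}\end{smallmatrix}\right)$. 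Thus an alternating word $\sigma_1^{a_1}\sigma_2^{a_2}\cdots$ is carried, up to conjugation and a monomial in $q$, to the continuant matrix $M_q(c_1,\dots,c_k)$ of the associated continued fraction, whose entries are exactly the numerator/denominator data $\mathcal R,\mathcal S$ of a quantum rational. The whole point is to translate ``$\beta\in\ker\rho_3$ at $t_0$'' into vanishing of such continuants at $q_0=-t_0$, and to recognise the vanishing locus as $\Sigma_*$.

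First I would prove the forward inclusion, that $q_0\notin\Sigma_*$ forces faithfulness, via the following dichotomy for a braid $\beta\neq 1$ with $\rho_3(\beta)=I$ at $q_0$. A direct computation gives $\rho_3(\Delta^2)=t^3I$ for $\Delta=\sigma_1\sigma_2\sigma_1$, so if $\beta$ is central, $\beta=\Delta^{2m}$, then $\rho_3(\beta)=t_0^{3m}I=I$ forces $t_0^{3m}=1$, i.e. $q_0=-t_0$ is a root of unity; every such root (other than $1$, which is adjoined by hand in $\Sigma_*$) occurs as a zero of some $[n]_q$ and hence lies in $\Sigma$. If instead $\beta$ is non-central, then its image in $B_3/\langle\Delta^2\rangle\cong\mathrm{PSL}_2(\Z)$ is a non-trivial M\"obius transformation, and on bringing $\rho_3(\beta)$ into the continuant form \eqref{MqHir}, \eqref{eq:RS_continuants} its off-diagonal entry is a monomial times a denominator continuant $\mathcal S=K_{k-1}(c_2,\dots,c_k)_q$ with $\mathcal S(1)=s\neq 0$, hence a non-zero polynomial. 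The relation $\rho_3(\beta)=I$ forces this entry to vanish, so $\mathcal S(q_0)=0$ and $q_0$ is a pole of a quantum rational, i.e. $q_0\in\Sigma$. In either case $q_0\in\Sigma_*$, so $q_0\notin\Sigma_*$ leaves no non-trivial kernel element.

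For the converse I would construct, for each $q_0\in\Sigma_*$, an explicit non-trivial element of the kernel. If $q_0$ is a root of unity the central power $\Delta^{2m}$ with $t_0^{3m}=1$ already works. If $q_0\in\Sigma$ is a genuine pole, I would pick a rational $r/s$ with denominator $\mathcal S(q_0)=0$ and use its continued fraction to build a braid whose continuant matrix degenerates at $q_0$; combining two continued-fraction words representing the same element of $\mathrm{PSL}_2(\Z)$ and correcting by a suitable power of $\Delta^2$ (using $\det\rho_3(\sigma_i)=-t=q$ to track the overall scalar) should produce a word mapping exactly to $I$ at $q_0$, hence a non-trivial kernel element. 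This matches the two mechanisms of the dichotomy above.

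The hard part will be the non-central case of the dichotomy: showing that for an \emph{arbitrary} non-central $\beta$ the relation $\rho_3(\beta)=I$ genuinely forces a denominator continuant to vanish, rather than the matrix collapsing through some cancellation unrelated to $\Sigma$. Here I expect to need the palindromicity of $\Tr M_q$ and the continuant degree bookkeeping underlying Theorem \ref{thm:palindromic} and \eqref{Trace Equation}, in order to control the reduction of $\rho_3(\beta)$ to the standard continuant form (including the handling of inverse generators and conjugation) and to rule out simultaneous vanishing of the trace and the off-diagonal entries except at zeros of $\mathcal S$. A secondary difficulty is the scalar bookkeeping in the converse, ensuring the constructed word lands on $I$ rather than merely on a scalar matrix, together with the separate treatment of $q_0=1$ (that is $t_0=-1$), where $\rho_3$ reduces to the integral symplectic representation of $B_3$ and classical results already give non-faithfulness.
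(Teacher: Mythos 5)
You should first note that the paper does not prove this theorem at all: it is quoted from \cite{MGOV_2024}, so there is no internal proof to compare against, and your proposal must be judged on its own merits. Your forward direction has the right skeleton: the substitution $q=-t$, the central/non-central dichotomy, the computation $\rho_3(\Delta^2)=t^3 I$, and the identification of the off-diagonal entry of the continuant form \eqref{MqHir} with a quantum denominator $\mathcal{S}$ are all sound, and you correctly flag that the reduction of an \emph{arbitrary} non-central braid --- with inverse generators, conjugations, and the elliptic classes coming from the torsion of $B_3/\langle\Delta^2\rangle\cong PSL(2,\Z)\cong \Z/2 * \Z/3$ --- to an alternating word whose image is a monomial multiple of $M_q(c_1,\ldots,c_k)$ is the technical heart. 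That part of your write-up is missing rather than wrong, though your suggestion that the palindromicity of $\Tr M_q$ and the degree bookkeeping behind Theorem \ref{thm:palindromic} would control this reduction is doubtful: those are tools for a different question (palindromicity of $\mathcal{S}$), not for normal forms in $B_3$.

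The genuine gap is your converse. If $w_1$ and $w_2$ are braid words representing the same element of $PSL(2,\Z)$, then $w_1w_2^{-1}$ lies in the kernel of $B_3\to PSL(2,\Z)$, which is exactly the center $\langle\Delta^2\rangle$; hence ``correcting by a suitable power of $\Delta^2$'' produces the \emph{trivial} braid, not a non-trivial kernel element. As described, your mechanism can only ever detect roots of unity (through the scalar $\rho_3(\Delta^{2})=t^{3}I$), and it says nothing about the points of $\Sigma$ that are not roots of unity --- and these exist in abundance, since most quantum denominators $\mathcal{S}(q)$ are \emph{not} products of cyclotomic polynomials (that is the entire point of the present paper on Kronecker fractions). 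What is actually needed at a genuine pole $q_0$, where $\mathcal{S}(q_0)=0$ for some $[r/s]_q$, is a reducibility argument: the braid $\beta_{r/s}$ built from the continued fraction specializes at $q_0$ to a triangular matrix, as does $\sigma_1$, so commutators such as $u=[\beta_{r/s},\sigma_1]$ and $v=[\beta_{r/s},\sigma_1^{2}]$ specialize to upper-triangular \emph{unipotent} matrices, which commute with one another; hence $[u,v]$ maps to $I$ at $q_0$, while $[u,v]\neq 1$ in $B_3$ (its commutator subgroup is free of rank $2$, and one checks $u,v$ do not commute there). Some construction of this kind, not the scalar bookkeeping you propose, is what makes every pole of a $q$-rational yield a kernel element; without it the ``only if $-t_0\notin\Sigma_*$'' half of the theorem is unproven.
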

 
The braids, corresponding to the Kronecker fractions, are special since they belong to the kernel of the Burau representation specialized only at roots of unity.

\section*{Acknowledgements}
We are grateful to Nick Ovenhouse and Alexey Ustinov for useful
discussions and to Prof.~K. Yanagawa for bringing our attention to
their work \cite{KMRWY}.


\appendix
\section{Known Kronecker fractions}
\label{numKro}

We present here all Kronecker fractions in the interval $(0,\frac12)$ with denominator (in reduced form) not exceeding $5000$.

\subsection{Kronecker families}
\label{pgc}
In all of the cases in the following table, we have looked at the $q$-denominators for $n = 1, 2, \dots, 10$ and used that to predict a general form that we have then been able to prove rigorously in the same manner as the examples at
the end of section \ref{sec:drei}. In some of the lengthier case, the symbolic
manipulation software Maple \cite{Maple} was used to perform the calculations.

The parameter $n$ here is assumed to be any positive integer since for $n=1$ we can use the equality
\begin{equation*}
    [0, a_1, \dots, a_{k-1}, a_k, 0, a_{k+1}, a_{k+2}, \dots, a_{n}] = [0, \dots, a_{k-1}, a_k + a_{k+1}, a_{k+2}, \dots, a_n]
\end{equation*}
 
\begin{center}
 \renewcommand{\arraystretch}{1.4}
    \begin{tabular}{|c|c|c|c|}
    \hline
        Case & Continued Fraction & Rational & $q$-Denominator \\
        \hline
        $K1$ & $[0, n+1]$ & $\frac{1}{n+1}$ & $[n+1]_q$ \\
        \hline
        
        $K2$ & $[0, 2, n-1, 2]$ & $\frac{2n-1}{4n}$ & $[2]_q [n]_q (1 + q^2)$ \\
        \hline
        
        $K3$ & $[0, n, 1, n]$ & $\frac{n+1}{n(n+2)}$ & $[n]_q [n+2]_q$ \\
        \hline
        
        $K4$ & $[0, n, 2, n]$ & $\frac{2n+1}{2n(n+1)}$ & $[n]_q [n+1]_q (1 + q^2)$ \\
        \hline
        
        $K5$ & $[0, n, 1, n, 1, n]$ & $\frac{n^2 + 3n + 1}{n(n + 1)(n + 3)}$ & $[n]_q[n+1]_q[n+3]_q$ \\
        \hline

        $K6$ & $[0, n, n+3, n]$ & $\frac{n^2 + 3n + 1}{n(n + 1)(n + 2)}$ & $[n]_q[n + 1]_q[n + 2]_q(1 - q + q^2)$ \\
        \hline
        
        $K7$ & $[0, n+2, n-1, n+2]$ & $\frac{n^2 + n -1}{n(n + 1)(n + 2)}$ & $[n]_q[n + 1]_q[n + 2]_q(1 - q + q^2)$ \\
        \hline

        $K8$ & $[0, n, 1, 2n, 1, n]$ & $\frac{2n^2 + 4n + 1}{2n(n+1)(n+2)}$ & $[n]_q [n+1]_q [n+2]_q(1+q^{n+1})$ \\
        \hline

        $K9$ & $[0, n+1, 1, n-1, 1, n+1]$ & $\frac{n^2 + 3n + 1}{n(n+2)(n+3)}$ & $[n]_q [n+2]_q [n+3]_q$ \\
        \hline
        $K10$ & $[0, 2n + 1, 1, n-1, 1, 2n + 1]$ & $\frac{2n^2 + 4n + 1}{4n(n + 1)(n + 2)}$ & $[n]_q [n+1]_q [n+2]_q (1 + q^{n+1})^2$ \\
        \hline

        $K11$ & $[0, n, 2, 2n, 2, n]$ & $\frac{8n^2 + 8n + 1}{4n(n + 1)(2n + 1)}$ & $[n]_q [n+1]_q [2n+1]_q (1 + q^2)^2$ \\
        \hline

        $K12$ & $[0, n, 1, 2n, 1, 2n, 1, n]$ & $\frac{4n^3 + 12n^2 + 9n + 1}{n(n + 2)(2n + 1)(2n + 3)}$ & $[n]_q [n+2]_q [2n+1]_q [2n+3]_q$ \\
        \hline

$K13$ & $[0, n, 1, n, 2n+2, n, 1, n]$ & $\frac{2n^4 + 8n^3 + 12n^2 + 8n + 1}{2n (n+1)^3 (n+2)}$ &$[n]_q [n+1]_q^3 [n+2]_q (1 + q^{n+1}) (1 - q + q^2)$ \\
\hline
    \end{tabular}
\end{center}

\newpage

\subsection{Exceptional (sporadic) Kronecker fractions}

In the following table we present all remaining Kronecker fractions with denominator less than 5000, which have not yet been found to be part of a family. 
We do not rule out the possibility that these fractions might belong to
some family that we have not yet identified.

Note that no new Kronecker fractions have been found with denominators
between 4000 and 5000, which might suggest that the list is not far
from complete.

\begin{center}
 \renewcommand{\arraystretch}{1.4}
    \begin{tabular}{|c|c|c|c|}
    \hline
        Case & Continued Fraction & Rational & $q$-Denominator \\
        \hline
        
        $E1$ & $[0, 2, 1, 1, 2, 1, 1, 2]$ & $\frac{31}{80}$ & $[2]_q^3[5]_q(1+q^2)$ \\
        \hline
    
        $E2$ & $[0, 3, 3, 1, 3, 3]$ & $\frac{49}{160}$ & $[2]_q^2[5]_q(1+q^2)^3$ \\
        \hline

        $E3$ & $[0, 3, 2, 1, 1, 1, 2, 3]$ & $\frac{71}{240}$ & $[2]_q^2[3]_q[5]_q(1+q^2)^2$ \\
        \hline

        $E4$ & $[0, 2, 1, 2, 3, 2, 1, 2]$ & $\frac{89}{240}$ & $[2]_q^2[3]_q[5]_q(1+q^2)^2$ \\
        \hline

        $E5$ & $[0, 2, 3, 1, 2, 1, 3, 2]$ & $\frac{127}{288}$ & $[2]_q^3[3]_q^2(1+q^2)^2(1-q+q^2)$ \\
        \hline

        $E6$ & $[0, 2, 2, 1, 5, 1, 2, 2]$ & $\frac{134}{315}$ & $[3]_q^2[5]_q[7]_q$ \\
        \hline

        $E7$ & $[0, 3, 1, 1, 6, 1, 1, 3]$ & $\frac{99}{350}$ & $[2]_q[5]_q^2[7]_q$ \\
        \hline


        $E8$ & $[0, 2, 3, 2, 1, 2, 3, 2]$ & $\frac{209}{480}$ & $[2]_q^2[3]_q[5]_q(1 + q^2)^3$ \\
        \hline

        $E9$ & $[0, 3, 2, 1, 4, 1, 2, 3]$ & $\frac{161}{540}$ & $[2]_q[3]_q^3[5]_q(1 + q^2)(1-q+q^2)$ \\
        \hline

        $E10$ & $[0, 4, 1, 1, 6, 1, 1, 4]$ & $\frac{127}{576}$ & $[2]_q^5[3]_q^2(1 + q^2)(1-q+q^2)^3$ \\
        \hline

        $E11$ & $[0, 2, 2, 1, 1, 3, 1, 1, 2, 2]$ & $\frac{251}{600}$ & $[2]_q^2[3]_q[5]_q^2(1 + q^2)$ \\
        \hline

        $E12$ & $[0, 2, 3, 1, 1, 2, 1, 1, 3, 2]$ & $\frac{351}{800}$ & $[2]_q^3[5]_q^2(1 + q^2)^2$ \\
        \hline

        $E13$ & $[0, 2, 6, 1, 2, 1, 6, 2]$ & $\frac{391}{840}$ & $[2]_q[3]_q[5]_q[7]_q(1 + q^2)(1 + q^4)$ \\
        \hline

        $E14$ & $[0, 3, 1, 1, 2, 2, 2, 1, 1, 3]$ & $\frac{251}{900}$ & $[2]_q[3]_q^2[5]_q^2(1 + q^2)$ \\
       \hline
        
        $E15$ & $[0, 2, 7, 4, 7, 2]$ & $\frac{449}{960}$ & $[2]_q^3[3]_q[5]_q(1 + q^2)^2(1 - q + q^2)^2(1 + q^4)$ \\
        \hline
 
  $E16$   & $[0, 2, 1, 1, 2, 1, 3, 1, 2, 1, 1, 2]$ & $\frac{559}{1440}$ & $[2]_q^4[3]_q^2[5]_q(1 + q^2)(1 - q + q^2)$ \\
        \hline

 $E17$    & $[0, 4, 1, 2, 7, 2, 1, 4]$ & $\frac{323}{1512}$ & $[2]_q^2[3]_q^3[7]_q(1 + q^2)(1 - q + q^2)^2$ \\
        \hline

 $E18$    & $[0, 2, 6, 2, 1, 2, 6, 2]$ & $\frac{701}{1512}$ & $[2]_q^2[3]_q^3[7]_q(1 + q^2)(1 - q + q^2)^2$ \\
        \hline

 $E19$    & $[0, 3, 1, 1, 3, 2, 3, 1, 1, 3]$ & $\frac{449}{1600}$ & $[2]_q^3[5]_q^2(1 + q^2)^3$ \\
        \hline

 $E20$    & $[0, 2, 2, 1, 5, 1, 5, 1, 2, 2]$ & $\frac{919}{2160}$ & $[2]_q^2[3]_q^3[5]_q(1 + q^2)(1 - q + q^2)(1 + q^4)$ \\
        \hline

$E21$    & $[0, 2, 4, 1, 3, 1, 3, 1, 4, 2]$ & $\frac{1217}{2688}$ & $[2]_q^4[3]_q[7]_q(1 + q^2)^3(1 - q + q^2)$ \\
        \hline

$E22$    & $[0, 3, 7, 1, 4, 1, 7, 3]$ & $\frac{1151}{3600}$ & $[2]_q[3]_q[5]_q^2(1 + q^2)^2(1 + q^4)(1 + q^3 + q^6)$ \\
        \hline

$E23$    & $[0, 2, 1, 2, 1, 1, 1, 3, 1, 1, 1, 2, 1, 2] $ & $\frac{1409}{3840}$ & $[2]_q^6[3]_q[5]_q(1 + q^2)^2(1 - q + q^2)$ \\
        \hline
    \end{tabular}
\end{center}

 \end{document}